\theoremstyle{plain}
\newtheorem{thm}{Theorem}[section]
\newtheorem{prop}[thm]{Proposition}
\newtheorem{cor}[thm]{Corollary}
\newtheorem{lem}[thm]{Lemma}
\theoremstyle{remark}
\newtheorem{rmk}[thm]{Remark}
\theoremstyle{definition}
\newtheorem{exam}[thm]{Example}
\def\rmoveio#1#2{
\setlength{\unitlength}{#1}
\begin{picture}(50,30)
\put(5,0){\line(0,1){30}}

{\allinethickness{.8pt}
\put(10,15){\vector(1,0){13}}
\put(23,15){\vector(-1,0){13}}}

\qbezier(25,0)(25,20)(40,20)
\qbezier(40,20)(45,20)(45,15)
\qbezier(45,15)(45,10)(40,10)
\qbezier(40,10)(35,10)(31,14)
\qbezier(28,17)(25,25)(25,30)

\ifnum#2=2
\put(3,28){\path(0,0)(2,2)(4,0)}
\put(28,28){\path(0,0)(2,2)(4,0)}
\put(38,15){\makebox{${\Huge c_{1}}$}}
\fi

\end{picture}
}
\def\rmoveiio#1#2{
\setlength{\unitlength}{#1}
\begin{picture}(60,30)
\put(5,0){\line(0,1){30}}
\put(15,0){\line(0,1){30}}

{\allinethickness{.8pt}
\put(20,15){\vector(1,0){15}}
\put(35,15){\vector(-1,0){15}}}

\qbezier(40,0)(42,1)(47,3)
\qbezier(52,6)(68,15)(52,24)
\qbezier(47,27)(42,30)(40,30)

\qbezier(60,0)(20,15)(60,30)

\ifnum#2=2
\put(2,27){\path(0,0)(3,3)(6,0)}
\put(12,27){\path(0,0)(3,3)(6,0)}
\put(40,27){\path(0,0)(0,3)(3,3)}
\put(60,27){\path(0,0)(0,3)(-3,3)}
\put(47,16){\makebox{${\Huge c_{1}}$}}
\put(47,10){\makebox{${\Huge c_{2}}$}}
\fi

\ifnum#2=3
\put(2,2){\path(0,0)(3,-3)(6,0)}
\put(12,27){\path(0,0)(3,3)(6,0)}
\put(40,3){\path(0,0)(0,-3)(3,-3)}
\put(60,27){\path(0,0)(0,3)(-3,3)}
\put(47,16){\makebox{${\Huge c_{1}}$}}
\put(47,10){\makebox{${\Huge c_{2}}$}}
\fi

\end{picture}
}
\def\rmoveiiio#1#2{
\setlength{\unitlength}{#1}
\begin{picture}(75,30)
\put(0,0){\line(1,1){15}}
\qbezier(15,15)(20,20)(20,30)

\put(10,0){\line(-1,1){4}}
\qbezier(4,6)(-5,15)(5,25)
\put(5,25){\line(1,1){5}}

\qbezier(20,0)(20,10)(16,14)
\put(14,16){\line(-1,1){8}}
\put(4,26){\line(-1,1){4}}

{\allinethickness{.8pt}
\put(25,15){\vector(1,0){15}}
\put(40,15){\vector(-1,0){15}}}

\qbezier(50,0)(50,10)(55,15)
\put(55,15){\line(1,1){15}}

\put(60,0){\line(1,1){5}}
\qbezier(65,5)(75,15)(66,24)
\put(64,26){\line(-1,1){4}}

\put(70,0){\line(-1,1){4}}
\put(64,6){\line(-1,1){8}}
\qbezier(54,16)(50,20)(50,30)

\ifnum#2=2
\put(0,27){\path(0,0)(0,3)(3,3)}
\put(7,30){\path(0,0)(3,0)(3,-3)}
\put(17,27){\path(0,0)(3,3)(6,0)}

\put(10,23){\makebox{${\Huge c_{1}}$}}
\put(10,3){\makebox{${\Huge c_{2}}$}}
\put(20,13){\makebox{${\Huge c_{3}}$}}

\put(47,27){\path(0,0)(3,3)(6,0)}
\put(60,27){\path(0,0)(0,3)(3,3)}
\put(67,30){\path(0,0)(3,0)(3,-3)}

\put(70,23){\makebox{${\Huge c'_{2}}$}}
\put(70,3){\makebox{${\Huge c'_{1}}$}}
\put(60,13){\makebox{${\Huge c'_{3}}$}}
\fi

\end{picture}
}
\def\rmovevio#1#2{
\setlength{\unitlength}{#1}
\begin{picture}(50,30)
\put(5,0){\line(0,1){30}}

{\allinethickness{.8pt}
\put(10,15){\vector(1,0){15}}
\put(25,15){\vector(-1,0){15}}}

\qbezier(30,0)(30,20)(45,20)
\qbezier(45,20)(50,20)(50,15)
\qbezier(50,15)(50,10)(45,10)
\qbezier(45,10)(30,10)(30,30)

\put(34,15){\circle{5}}

\ifnum#2=2
\put(3,28){\path(0,0)(2,2)(4,0)}
\put(28,28){\path(0,0)(2,2)(4,0)}
\put(38,15){\makebox{${\Huge c_{1}}$}}
\fi

\end{picture}
}
\def\rmoveviio#1#2{
\setlength{\unitlength}{#1}
\begin{picture}(60,30)
\put(5,0){\line(0,1){30}}
\put(15,0){\line(0,1){30}}

{\allinethickness{.8pt}
\put(20,15){\vector(1,0){15}}
\put(35,15){\vector(-1,0){15}}}

\qbezier(40,0)(80,15)(40,30)
\qbezier(60,0)(20,15)(60,30)
\put(50,4){\circle{5}}
\put(50,25){\circle{5}}

\ifnum#2=2
\put(2,27){\path(0,0)(3,3)(6,0)}
\put(12,27){\path(0,0)(3,3)(6,0)}
\put(40,27){\path(0,0)(0,3)(3,3)}
\put(60,27){\path(0,0)(0,3)(-3,3)}
\put(47,15){\makebox{${\Huge c_{1}}$}}
\put(47,10){\makebox{${\Huge c_{2}}$}}
\fi

\ifnum#2=3
\put(2,2){\path(0,0)(3,-3)(6,0)}
\put(12,27){\path(0,0)(3,3)(6,0)}
\put(40,3){\path(0,0)(0,-3)(3,-3)}
\put(60,27){\path(0,0)(0,3)(-3,3)}
\put(47,15){\makebox{${\Huge c_{1}}$}}
\put(47,10){\makebox{${\Huge c_{2}}$}}
\fi

\end{picture}
}
\def\rmoveviiio#1#2{
\setlength{\unitlength}{#1}
\begin{picture}(70,30)
\put(0,0){\line(1,1){15}}
\qbezier(15,15)(20,20)(20,30)

\put(10,0){\line(-1,1){5}}
\qbezier(5,5)(-5,15)(5,25)
\put(5,25){\line(1,1){5}}

\qbezier(20,0)(20,10)(15,15)
\put(15,15){\line(-1,1){15}}

\put(5,5){\circle{5}}
\put(15,15){\circle{5}}
\put(5,25){\circle{5}}

{\allinethickness{.8pt}
\put(28,15){\vector(1,0){14}}
\put(42,15){\vector(-1,0){14}}}

\qbezier(50,0)(50,10)(55,15)
\put(55,15){\line(1,1){15}}

\put(60,0){\line(1,1){5}}
\qbezier(65,5)(75,15)(65,25)
\put(65,25){\line(-1,1){5}}

\put(70,0){\line(-1,1){15}}
\qbezier(55,15)(50,20)(50,30)

\put(65,5){\circle{5}}
\put(55,15){\circle{5}}
\put(65,25){\circle{5}}

\ifnum#2=2
\put(0,27){\path(0,0)(0,3)(3,3)}
\put(7,30){\path(0,0)(3,0)(3,-3)}
\put(17,27){\path(0,0)(3,3)(6,0)}

\put(20,13){\makebox{${\Huge c_{1}}$}}

\put(47,27){\path(0,0)(3,3)(6,0)}
\put(60,27){\path(0,0)(0,3)(3,3)}
\put(67,30){\path(0,0)(3,0)(3,-3)}

\put(60,13){\makebox{${\Huge c'_{1}}$}}
\fi

\end{picture}
}
\def\rmovevivo#1#2{
\setlength{\unitlength}{#1}
\begin{picture}(70,30)
\put(0,0){\line(1,1){15}}
\qbezier(15,15)(20,20)(20,30)

\put(10,0){\line(-1,1){5}}
\qbezier(5,5)(-5,15)(5,25)
\put(5,25){\line(1,1){5}}

\qbezier(20,0)(20,10)(16,14)
\put(14,16){\line(-1,1){14}}

\put(5,5){\circle{5}}
\put(5,25){\circle{5}}

{\allinethickness{.8pt}
\put(28,15){\vector(1,0){14}}
\put(42,15){\vector(-1,0){14}}}

\qbezier(50,0)(50,10)(55,15)
\put(55,15){\line(1,1){15}}

\put(60,0){\line(1,1){5}}
\qbezier(65,5)(75,15)(65,25)
\put(65,25){\line(-1,1){5}}

\put(70,0){\line(-1,1){14}}
\qbezier(54,16)(50,20)(50,30)

\put(65,5){\circle{5}}
\put(65,25){\circle{5}}

\ifnum#2=2
\put(0,27){\path(0,0)(0,3)(3,3)}
\put(7,30){\path(0,0)(3,0)(3,-3)}
\put(17,27){\path(0,0)(3,3)(6,0)}

\put(20,13){\makebox{${\Huge c_{1}}$}}

\put(47,27){\path(0,0)(3,3)(6,0)}
\put(60,27){\path(0,0)(0,3)(3,3)}
\put(67,30){\path(0,0)(3,0)(3,-3)}

\put(60,13){\makebox{${\Huge c'_{1}}$}}
\fi

\end{picture}
}
\def\fmove#1#2{
\setlength{\unitlength}{#1}
\begin{picture}(75,30)

\put(0,0){\line(1,1){15}}
\qbezier(15,15)(20,20)(20,30)

\put(10,0){\line(-1,1){4}}
\qbezier(4,6)(-5,15)(5,25)
\put(5,25){\line(1,1){5}}

\qbezier(20,0)(20,10)(16,14)
\put(14,16){\line(-1,1){14}}
\put(5,25){\circle{5}}

{\allinethickness{.8pt}
\put(25,15){\vector(1,0){15}}
\put(40,15){\vector(-1,0){15}}}

\qbezier(50,0)(50,10)(55,15)
\put(55,15){\line(1,1){15}}

\put(60,0){\line(1,1){5}}
\qbezier(65,5)(75,15)(66,24)
\put(64,26){\line(-1,1){4}}

\put(70,0){\line(-1,1){14}}
\qbezier(54,16)(50,20)(50,30)
\put(65,5){\circle{5}}

\ifnum#2=2
\put(0,27){\path(0,0)(0,3)(3,3)}
\put(7,30){\path(0,0)(3,0)(3,-3)}
\put(17,27){\path(0,0)(3,3)(6,0)}

\put(10,23){\makebox{${\Huge c_{1}}$}}
\put(10,3){\makebox{${\Huge c_{2}}$}}
\put(20,13){\makebox{${\Huge c_{3}}$}}

\put(47,27){\path(0,0)(3,3)(6,0)}
\put(60,27){\path(0,0)(0,3)(3,3)}
\put(67,30){\path(0,0)(3,0)(3,-3)}

\put(70,23){\makebox{${\Huge c'_{2}}$}}
\put(70,3){\makebox{${\Huge c'_{1}}$}}
\put(60,13){\makebox{${\Huge c'_{3}}$}}
\fi

\end{picture}
}
\def\canocutsysi#1#2{
\setlength{\unitlength}{#1}
\begin{picture}(35,20)

\put(5,0){\line(1,1){20}}
\put(25,0){\line(-1,1){20}}
\put(15,10){\circle{4}}

\put(10,5){\path(0,0)(-3,0)(0,-3)(0,0)}
\put(22,3){\path(0,0)(-3,0)(0,3)(0,0)}

\put(5,0){\path(0,3)(0,0)(3,0)}
\put(25,0){\path(-3,0)(0,0)(0,3)}

\ifnum#2=2
\put(0,15){\makebox{{$i$}}}
\put(0,0){\makebox{{$i$}}}
\put(27,15){\makebox{{$i+1$}}}
\put(27,0){\makebox{{$i+1$}}}
\fi

\end{picture}
}
\def\alexnum#1{
\setlength{\unitlength}{#1}
\begin{picture}(90,20)

\put(5,0){\line(1,1){20}}
\put(25,0){\line(-1,1){9}}
\put(14,11){\line(-1,1){9}}

\put(5,0){\path(0,3)(0,0)(3,0)}
\put(25,0){\path(-3,0)(0,0)(0,3)}

\put(0,15){\makebox{{$i$}}}
\put(0,0){\makebox{{$i$}}}
\put(27,15){\makebox{{$i+1$}}}
\put(27,0){\makebox{{$i+1$}}}

\put(55,0){\line(1,1){9}}
\put(75,0){\line(-1,1){20}}
\put(66,11){\line(1,1){9}}

\put(55,0){\path(0,3)(0,0)(3,0)}
\put(75,0){\path(-3,0)(0,0)(0,3)}

\put(50,15){\makebox{{$i$}}}
\put(50,0){\makebox{{$i$}}}
\put(77,15){\makebox{{$i+1$}}}
\put(77,0){\makebox{{$i+1$}}}

\end{picture}
}
\def\alexnumv#1{
\setlength{\unitlength}{#1}
\begin{picture}(35,20)

\put(5,0){\line(1,1){20}}
\put(25,0){\line(-1,1){20}}
\put(15,10){\circle{4}}

\put(5,0){\path(0,3)(0,0)(3,0)}
\put(25,0){\path(-3,0)(0,0)(0,3)}

\put(0,15){\makebox{{$j$}}}
\put(0,0){\makebox{{$i$}}}
\put(27,15){\makebox{{$i$}}}
\put(27,0){\makebox{{$j$}}}

\end{picture}
}
\def\rmovetiiio#1#2{
\setlength{\unitlength}{#1}
\begin{picture}(70,20)

\put(0,0){\line(1,1){20}}
\put(20,0){\line(-1,1){9}}
\put(9,11){\line(-1,1){9}}

\put(7,3){\line(-1,1){4}}
\put(13,3){\line(1,1){4}}
\put(3,13){\line(1,1){4}}
\put(17,13){\line(-1,1){4}}

{\allinethickness{.8pt}
\put(25,10){\vector(1,0){10}}
\put(35,10){\vector(-1,0){10}}}

\qbezier(40,5)(48,20)(54,11)
\qbezier(56, 9)(62,0)(70,15)

\qbezier(40,15)(48,0)(55,10)
\qbezier(55,10)(62,20)(70,5)

\put(43,10){\circle{5}}
\put(67,10){\circle{5}}

\ifnum#2=2
\put(0,17){\path(0,0)(0,3)(3,3)}
\put(17,20){\path(0,0)(3,0)(3,-3)}

\put(15,8){\makebox{${\Huge c_{1}}$}}

\put(40,12){\path(0,0)(0,3)(3,3)}
\put(67,15){\path(0,0)(3,0)(3,-3)}

\put(53,3){\makebox{${\Huge c'_{1}}$}}
\fi

\end{picture}
}
\def\alexnum#1{
\setlength{\unitlength}{#1}
\begin{picture}(90,20)

\put(5,0){\line(1,1){20}}
\put(25,0){\line(-1,1){9}}
\put(14,11){\line(-1,1){9}}

\put(5,0){\path(0,3)(0,0)(3,0)}
\put(25,0){\path(-3,0)(0,0)(0,3)}

\put(0,15){\makebox{{$i$}}}
\put(0,0){\makebox{{$i$}}}
\put(27,15){\makebox{{$i+1$}}}
\put(27,0){\makebox{{$i+1$}}}

\put(55,0){\line(1,1){9}}
\put(75,0){\line(-1,1){20}}
\put(66,11){\line(1,1){9}}

\put(55,0){\path(0,3)(0,0)(3,0)}
\put(75,0){\path(-3,0)(0,0)(0,3)}

\put(50,15){\makebox{{$i$}}}
\put(50,0){\makebox{{$i$}}}
\put(77,15){\makebox{{$i+1$}}}
\put(77,0){\makebox{{$i+1$}}}

\end{picture}
}
\begin{document}

\title{Parallelization of Welded Links
}
\author[Naoko Kamada]{Naoko Kamada}
\address{Graduate School of Natural Sciences,  Nagoya City University,
Mizuho-ku, Nagoya, Aichi 467-8501, Japan}
\email{kamada@nsc.nagoya-cu.ac.jp}


\author[Seiichi Kamada]{Seiichi Kamada}
\address{Department of Mathematics,
The University of Osaka,
Toyonaka, Osaka 560-0043, Japan}
\email{kamada@math.sci.osaka-u.ac.jp}

\subjclass[2020]{57K12}

\keywords{welded knots; virtual knots; parallel diagrams; parallelization, quandles}
\date{\today}

\begin{abstract}
The notion of a welded link was introduced by Fenn, Rim\'anyi, and Rourke as an analogue of welded braids.
A welded link is defined as an equivalence class of link diagrams that may contain virtual crossings, 
where the equivalence is generated by the classical and virtual Reidemeister moves together with the welded moves.
In this paper, we introduce a parallelization construction for welded link diagrams and show that it is well defined: if two diagrams represent equivalent welded links, then the corresponding parallel diagrams obtained by our construction are also equivalent.
When the two parallel strands are given parallel orientations, the resulting diagram admits a checkerboard coloring, whereas if they are assigned opposite orientations, the diagram is almost classical.
Our construction further yields a decomposition in which one component is a copy of the original diagram and the other is a diagram representing a trivial welded link.
We also investigate quandle colorings and the fundamental quandle of the parallel diagram, deriving a presentation from that of the original diagram.
Finally, we examine conditions under which the parallel diagram is non-split. 
\end{abstract}
\maketitle

\section{Introduction}\label{sect:intro} 

The notion of a welded link was first introduced by Fenn, Rim\'anyi, and Rourke \cite{FRR} as an analogue of welded braids.
A welded link is defined as an equivalence class of link diagrams that may contain virtual crossings, where the equivalence is generated by the Reidemeister moves, the virtual Reidemeister moves, and the welded moves, which are illustrated in Figures~\ref{fig:rmove}, \ref{fig:rmovev}, and \ref{fig:fmove}.

Throughout this paper, we refer to link diagrams that may contain virtual crossings as virtual link diagrams, or simply diagrams.
If we do not allow welded moves, then the equivalence relation on diagrams generated only by the Reidemeister moves and the virtual Reidemeister moves yields the notion of a virtual link.
It is well known that for diagrams without virtual crossings (i.e., classical link diagrams), the three notions—equivalence as classical links, equivalence as virtual links, and equivalence as welded links—coincide (cf. \cite{rGPV, rkauD}).
In this sense, both virtual knot theory and welded knot theory can be regarded as generalizations of classical knot theory.
Moreover, virtual knots correspond to abstract knots \cite{rkk} and stable equivalence classes of knots on surfaces \cite{rCKS}, whereas welded knots are closely related to ribbon torus-knots in the 4-space \cite{rSatoh}.

For a virtual link diagram, several notions of a parallel diagram have been introduced and studied (cf. \cite{rkn2, rkn3}).
Most previous works have been carried out from the viewpoint of virtual links, and a typical requirement has been that if two diagrams are equivalent as virtual links, then their parallel diagrams are also equivalent as virtual links.

In this paper, we introduce a method for constructing a parallel diagram from a given diagram and show that if two original diagrams are equivalent as welded links, then the parallel diagrams obtained by our method are also equivalent.

When an orientation is assigned to the parallel diagram produced by our method so that the two parallel arcs have parallel orientations, the resulting diagram admits a checkerboard coloring.
On the other hand, when the two parallel arcs are given opposite (i.e., non-parallel) orientations, we show that the diagram is almost classical.

Moreover, the parallel diagram obtained by our method has the following feature: with respect to the orientation of the original diagram, the subdiagram appearing on the right-hand side can always be regarded as a copy of the original diagram, whereas the subdiagram on the left-hand side represents a trivial welded link. 

We also discuss quandle colorings and the fundamental quandle of parallel diagrams.
In particular, from a presentation of the fundamental quandle of the original diagram, we derive a presentation of the fundamental quandle of the parallel diagram.

In the final section, we consider when the parallel diagram is non-split. 

\begin{figure}[h]
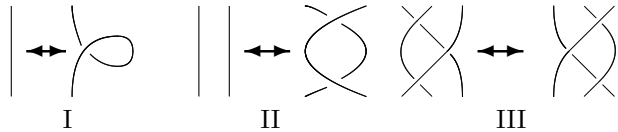

\centerline{
\begin{tabular}{ccc}
\rmoveio{.4mm}{1}&\rmoveiio{.4mm}{1}&\rmoveiiio{.4mm}{1}\\
I &  II & III  \\
\end{tabular}}
\caption{Reidemeister moves}
\label{fig:rmove}
\end{figure}

\begin{figure}[h]
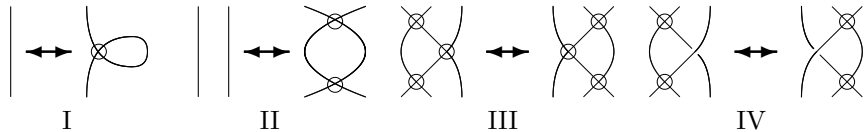

\centerline{
\begin{tabular}{cccc}
\rmovevio{.4mm}{1}&\rmoveviio{.4mm}{1}&\rmoveviiio{.4mm}{1}&\rmovevivo{.4mm}{1}\\
I &  II & III &IV \\
\end{tabular}}
\caption{Virtual Reidemeister moves}
\label{fig:rmovev}
\end{figure}

\begin{figure}[h]
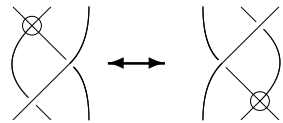

\centerline{\begin{tabular}{c}
\fmove{.5mm}{1}\\
\end{tabular}}
\caption{Forbidden move}
\label{fig:fmove}
\end{figure}

\section{Construction}\label{sect:const} 

Let $D$ be a virtual link diagram.  
Let $\phi_+(D)$ be a virtual link diagram such that for each classical or virtual crossing of $D$ we consider a diagram as in Figure~\ref{fig:phipositive}, and in the outside of regular neighborhoods of the crossings of $D$ we consider just parallel copies of the arcs.  For example, see Figure~\ref{fig:phiexample}. 

\begin{figure}[h]
\centerline{
\includegraphics[width=12cm]{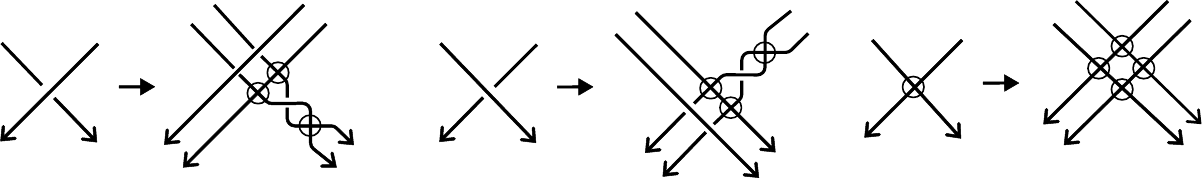}}
\centerline{positive crossing\phantom{MMMMM} negative crossing\phantom{MMMMM}  virtual crossing}
\caption{The construction $\phi_+$} 
\label{fig:phipositive}
\end{figure}

\begin{figure}[h]
\centerline{
\includegraphics[width=10cm]{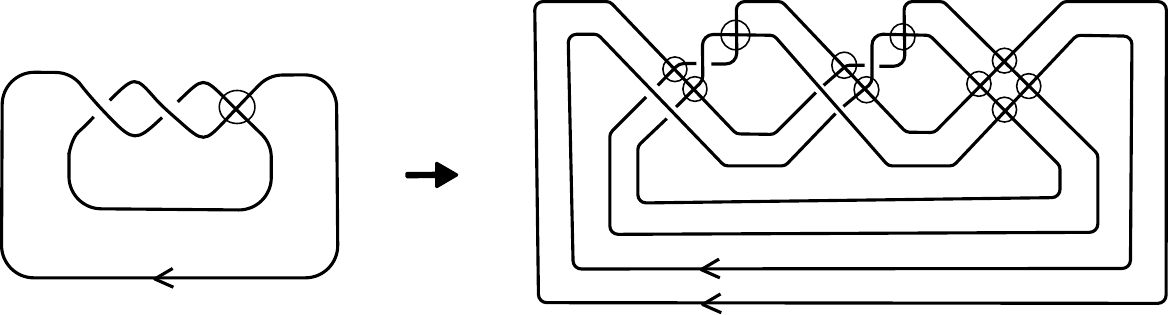}}
\centerline{$D_1$\phantom{MMMMMMMMMMMMMMM} $\phi_+ (D_1)$\phantom{MMM}}
\end{figure}

\begin{figure}[h]
\centerline{
\includegraphics[width=10cm]{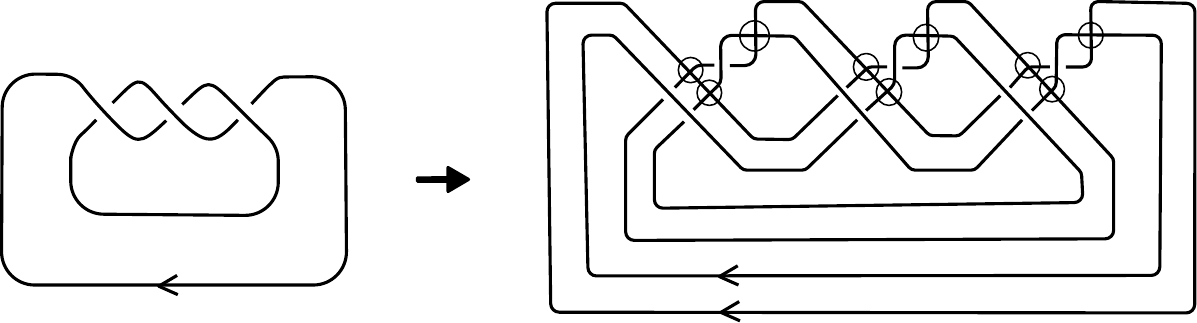}}
\centerline{$D_2$\phantom{MMMMMMMMMMMMMMM} $\phi_+ (D_2)$\phantom{MMM}}
\caption{Examples of $\phi_+(D)$} 
\label{fig:phiexample}
\end{figure}

Similarly, for a virtual link diagram $D$, we consider a diagram $\phi_-(D)$ constructed by using diagrams depicted in Figure~\ref{fig:phinegative}.

\begin{figure}[h]
\centerline{ 
\includegraphics[width=12cm]{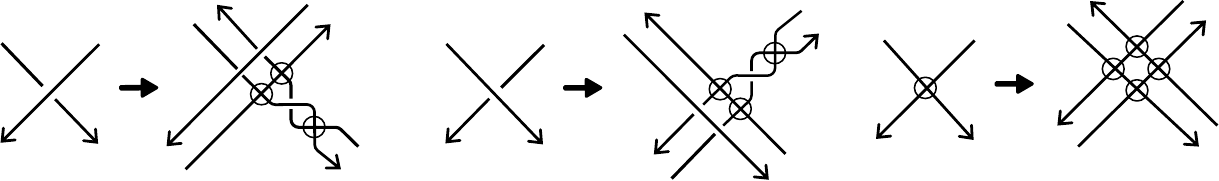}}
\centerline{positive crossing\phantom{MMMMM} negative crossing\phantom{MMMMM}  virtual crossing}
\caption{The construction $\phi_-$} 
\label{fig:phinegative}
\end{figure}

In the parallel diagram $\phi_+(D)$ obtained from a diagram $D$, we call the subdiagram that appears on the right-hand side with respect to the orientation of $D$ the \emph{main subdiagram} of $\phi_+(D)$, and denote it by $\phi_+(D)_{\rm R}$. 

Similarly, the subdiagram of $\phi_+(D)$ that appears on the left-hand side with respect to the orientation of $D$ is called the \emph{secondary subdiagram} of $\phi_+(D)$, and is denoted by $\phi_+(D)_{\rm L}$. (Here, R and L stand for \lq\lq Right\rq\rq and \lq\lq Left,\rq\rq respectively.)

The main subdiagram $\phi_+(D)_{\rm R}$ can always be regarded as a copy of the original diagram $D$, and $D$ and $\phi_+(D)_{\rm R}$ represent the same welded link (indeed, the same virtual link).
On the other hand, the secondary subdiagram $\phi_+(D)_{\rm L}$ contains only virtual crossings, and hence $\phi_+(D)_{\rm L}$ represents a trivial welded link (in fact, a trivial virtual link).

The same observations apply to the parallel diagram $\phi_-(D)$. 
We call the subdiagram of $\phi_-(D)$ that appears on the right side with respect to the orientation of $D$ the \emph{main subdiagram} of $\phi_-(D)$, and denote it by $\phi_-(D)_{\rm R}$.
Similarly, the subdiagram of $\phi_-(D)$ that appears on the left side with respect to the orientation of $D$ is called the \emph{secondary subdiagram} of $\phi_-(D)$, and is denoted by $\phi_-(D)_{\rm L}$. 

The only difference between the parallel diagrams $\phi_+(D)$ and $\phi_-(D)$ lies in the orientations of their secondary subdiagrams.
The secondary subdiagram of $\phi_+(D)$ is oriented parallel to the orientation of the original diagram $D$, whereas the secondary subdiagram of $\phi_-(D)$ is given the opposite orientation.

Now we have two maps 
\[
\phi_+ : \{ \text{virtual link diagrams} \} \rightarrow \{ \text{virtual link diagrams} \}
\]
and 
\[
\phi_- : \{ \text{virtual link diagrams} \} \rightarrow \{ \text{virtual link diagrams} \}. 
\]

\begin{thm}\label{thm:mainA}
If two virtual link diagrams $D$ and $D'$ are equivalent as welded links, then 
$\phi_+(D)$ and $\phi_+(D')$ are equivalent as welded links, and so are $\phi_-(D)$ and $\phi_-(D')$. 
\end{thm}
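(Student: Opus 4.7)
The plan is a move-by-move local verification. Welded equivalence is generated by the eight local moves of Figures~\ref{fig:rmove}, \ref{fig:rmovev}, \ref{fig:fmove} (together with their orientation variants), each of which modifies a diagram only inside a small disk $B$. The construction $\phi_+$ is defined crossing-by-crossing, and inserts parallel copies of arcs away from the crossings; consequently, whenever $D$ and $D'$ agree outside $B$, the parallel diagrams $\phi_+(D)$ and $\phi_+(D')$ agree outside a slightly enlarged disk $B'$. Hence it suffices to prove: for every generating move $D \leftrightarrow D'$, the two parallel tangles $\phi_+(D) \cap B'$ and $\phi_+(D') \cap B'$ are related by a finite sequence of Reidemeister, virtual Reidemeister, and welded moves fixing $\partial B'$, and analogously for $\phi_-$.

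For each generating move I would draw the two parallel pictures explicitly, using the templates of Figures~\ref{fig:phipositive} and \ref{fig:phinegative}, and exhibit the required sequence of moves. The classical R1 becomes a small kink between two parallel strands decorated with a few virtual crossings, removed by a single R1 together with VR1 and VR2. The classical R2 becomes two pairs of classical crossings wrapped by virtual crossings, opened up by two applications of R2 interleaved with VR2. The four virtual Reidemeister moves are the easiest, since under $\phi_\pm$ the local picture is entirely virtual and its equivalence follows from VR1--VR4 alone. Once $\phi_+$ is treated, the verification for $\phi_-$ is parallel: only the orientation of the secondary strand changes, which permutes some of the virtual crossings around each classical crossing but does not affect the combinatorial structure of the move sequences, so one can recycle the computations done for $\phi_+$ with minor bookkeeping.

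The delicate cases are the classical R3 and the welded move. For R3, each of the three classical crossings in the disk is replaced by the four-strand template of Figure~\ref{fig:phipositive}, producing a tangle with six classical crossings and many virtual companions; my plan is to carry out three applications of R3 to push the main-subdiagram crossings across the triple point, supplemented by VR3 and welded moves to transport the virtual companions past the classical overstrands so that the resulting arrangement matches $\phi_+$ applied to the opposite side of R3. The welded move is the principal obstacle, and is really the point of the theorem: substituting the templates of Figure~\ref{fig:phipositive} into both sides of Figure~\ref{fig:fmove} yields two tangles whose equivalence must be realized by repeated applications of the welded move itself, together with VR3 and R3 rearrangements. It is precisely here that one must check the construction is well defined at the welded level rather than only at the virtual level; I expect the bulk of the proof to consist of this single, careful diagrammatic verification, after which the corresponding verification for $\phi_-$ is obtained by the orientation-reversal argument noted above.
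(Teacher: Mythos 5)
Your overall strategy---reducing to a local verification for each generating move, with $\phi_-$ handled by recycling the $\phi_+$ computations---is the same as the paper's, but as written the plan has a genuine gap in the classical Reidemeister cases, and it also misplaces where the difficulty lies. The paper's proof rests on a preliminary technical tool, Lemma~\ref{lem:twistpass}: a $-1v$-twist or $v1$-twist (one classical plus one virtual crossing on the pair of parallel strands) can be passed through any arc of the parallel diagram, and hence slid to an arbitrary position, without changing the welded type. This lemma is what makes the R1, R2 and R3 verifications close up. In particular, your claim that the doubled R1 kink is ``removed by a single R1 together with VR1 and VR2'' ignores the framing issue: doubling a curl introduces a full relative twist of the two parallel strands, and the templates of Figure~\ref{fig:phipositive} are built precisely so that a compensating classical--virtual twist is present; undoing the doubled kink leaves a residual $\pm1v$-twist that must be transported along the strands and cancelled, which is exactly what Lemma~\ref{lem:twistpass} licenses. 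Without that lemma (or an equivalent mechanism) the R1 case does not reduce by the moves you list, and the R2/R3 cases have the same issue of relocating the leftover twists. Conversely, the welded move, which you single out as ``the principal obstacle,'' is actually the easy part: the secondary subdiagram carries only virtual crossings and the main subdiagram's strand is an over-arc, so the image of the welded move follows from detour moves and the welded move itself; the paper dismisses this case (and the virtual moves) as clear. Note also that your assertion that the virtual Reidemeister moves produce an ``entirely virtual'' local picture fails for the mixed move (Figure~\ref{fig:rmovev}, move IV), which contains a classical crossing, though that case is still handled by detour moves.

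A second, less serious, divergence: you propose to check every orientation variant of every classical move, whereas the paper invokes Polyak's theorem that the oriented moves $\Omega1a$, $\Omega1c$, $\Omega2c$, $\Omega2d$, $\Omega3b$ generate all oriented Reidemeister moves, cutting the verification down to five diagrams. Your longer route is workable in principle, but combined with the missing twist-sliding lemma it would leave you with many cases, each of which you cannot actually complete as described. To repair the proposal, you should first isolate and prove the twist-passing lemma and then carry out the five Polyak cases using it.
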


Thus, the maps $\phi_+$ and $\phi_+$ induce maps from the set of welded links to itself, which we use the same symbols $\phi_+$ and $\phi_+$ to denote:  
Now we have two maps 
\[
\phi_+ : \{ \text{welded links} \} \rightarrow \{ \text{welded links} \}
\]
and 
\[
\phi_- : \{ \text{welded links} \} \rightarrow \{ \text{welded links} \}. 
\]


\section{Proof of Theorem~\ref{thm:mainA}}\label{sect:proofoftheorem}

In this section, we prove Theorem~\ref{thm:mainA}.

We call the tangle diagrams depicted in Figure~\ref{fig:twists}
a $-1v$-twist and a $v1$-twist, respectively.

\begin{figure}[h]
\centerline{
\includegraphics[width=5.5cm]{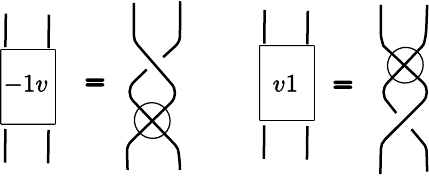}
}
\centerline{$-1v$-twist\phantom{MMMM}$v1$-twist}
\caption{A $-1v$-twist (left) and a $v1$-twist (right)}
\label{fig:twists}
\end{figure}

\begin{lem}\label{lem:twistpass}
The two local moves depicted in Figure~\ref{fig:twistspass}
do not change the equivalence classes of virtual link diagrams as welded links,
where the box in the figure stands for either a $-1v$-twist or a $v1$-twist.
\end{lem}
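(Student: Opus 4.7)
The strategy is to express each of the local moves in Figure~\ref{fig:twistspass} as a finite composition of classical Reidemeister, virtual Reidemeister, and welded (forbidden) moves. The key observation is that each twist tangle in Figure~\ref{fig:twists} consists of a single classical crossing placed immediately adjacent to a virtual crossing; this is precisely the configuration to which the welded move of Figure~\ref{fig:fmove} applies. As a preliminary, I would record the standard \emph{detour move}, which is a consequence of the virtual Reidemeister moves and permits any strand that meets a diagram only through virtual crossings to be freely rerouted.

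With these tools in hand, I would proceed twist by twist. For the $-1v$-twist, consider a strand entering one side of the box and exiting the other. First, use the detour move to reposition the portion of the strand that crosses only the virtual crossing of the twist, reducing to a situation in which the strand meets the twist in a neighborhood of the classical crossing with a virtual crossing nearby. Next, apply the welded move to swap the order of the classical and virtual crossings inside the box; this moves the virtual crossing to the other side of the classical one. A final detour move, together with at most a classical Reidemeister~II cancellation, puts the strand on the desired side of the box. The argument for the $v1$-twist is entirely parallel, with the roles of the positive and negative crossings interchanged, and the sequence of moves must be performed in a slightly different order to keep the over/under data consistent.

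The main obstacle I anticipate is bookkeeping of the over/under information. The welded move of Figure~\ref{fig:fmove} is permissible only when the strand in question passes \emph{over} the relevant classical crossing; its dual, in which the strand passes under, is precisely the forbidden move that is \emph{not} allowed in welded knot theory. Consequently each of the four configurations (two local moves times two box types) must be inspected separately to confirm that the welded move, rather than its forbidden counterpart, is the one being invoked. This is where most of the care lies, but in each case one can arrange by a preliminary detour for the over/under pattern to be compatible with the welded move, so the verification is routine once the configurations are laid out.

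Having established the lemma for the four elementary cases, the general statement (a strand passing through a twist box with any orientation and any initial height assignment) follows by combining these elementary reductions with further applications of the detour move and the classical Reidemeister moves.
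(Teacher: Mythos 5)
Your overall strategy---decompose each local move of Figure~\ref{fig:twistspass} into detour moves, Reidemeister moves, and the welded move of Figure~\ref{fig:fmove}, with the over/under data as the point requiring care---is the same one the paper follows: the paper reduces the left-hand move to two elementary slides and verifies the right-hand move by an explicit sequence of such moves for the $v1$-twist, the $-1v$-twist being analogous. However, two steps of your argument do not work as described. First, the welded move cannot ``swap the order of the classical and virtual crossings inside the box.'' Those two crossings are self-crossings of the parallel pair, so interchanging them would be a two-strand operation, whereas the welded move is a three-strand move in which a transverse arc is slid across a virtual crossing of two \emph{other} strands. What is actually required is to slide the transverse arc past the classical self-crossing of the twist (a Reidemeister~III move) and then past the virtual self-crossing (a detour move if the transverse arc meets the parallel pair virtually, and the welded move if it meets the pair classically); the box itself is never altered.

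Second, and more seriously, your claim that ``one can arrange by a preliminary detour for the over/under pattern to be compatible with the welded move'' is unjustified and is false as a general principle: a detour move only reroutes a portion of a strand all of whose crossings are virtual, so it can never change which strand is over at a classical crossing. If the transverse arc met the parallel pair at two classical crossings with the over/under pattern corresponding to the still-forbidden move, then sliding it across the virtual self-crossing of the twist would be exactly that forbidden move, and no preliminary detour could repair this. The lemma is true because the two configurations actually depicted in Figure~\ref{fig:twistspass} already carry the compatible over/under pattern by construction, and the content of the proof is precisely the case-by-case exhibition of the move sequences (this is what Figure~\ref{fig:proof1} supplies for the $v1$-twist in the right-hand move). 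As written, your argument declares the verification routine at exactly the point where it has to be carried out, so the proof is incomplete.
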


\begin{figure}[h]
\centerline{
\includegraphics[width=10cm]{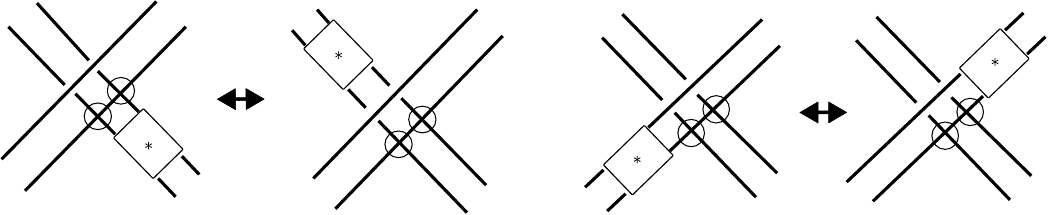}}
\caption{Passing twists through arcs}
\label{fig:twistspass}
\end{figure}

\begin{proof}
The moves depicted in Figures~\ref{fig:movev} and \ref{fig:moveo}
do not change the equivalence classes as welded links.
Hence, the move shown on the left side of Figure~\ref{fig:twistspass}
does not change the welded link type.
For the move on the right side, the case in which the box represents a $v1$-twist
is shown in Figure~\ref{fig:proof1}.
The case of a $-1v$-twist is analogous.
\end{proof}

\begin{figure}[h]
\centerline{
\includegraphics[width=3cm]{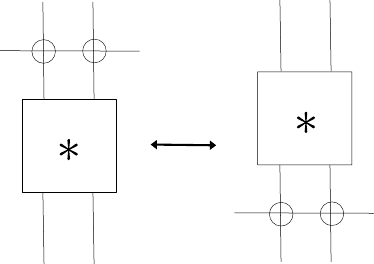}
}
\caption{}
\label{fig:movev}
\end{figure}

\begin{figure}[h]
\centerline{
\includegraphics[width=3cm]{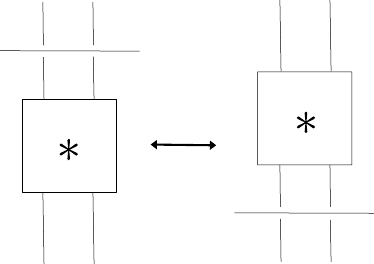}
}
\caption{}
\label{fig:moveo}
\end{figure}

\begin{figure}[h]
\centerline{
\includegraphics[width=8cm]{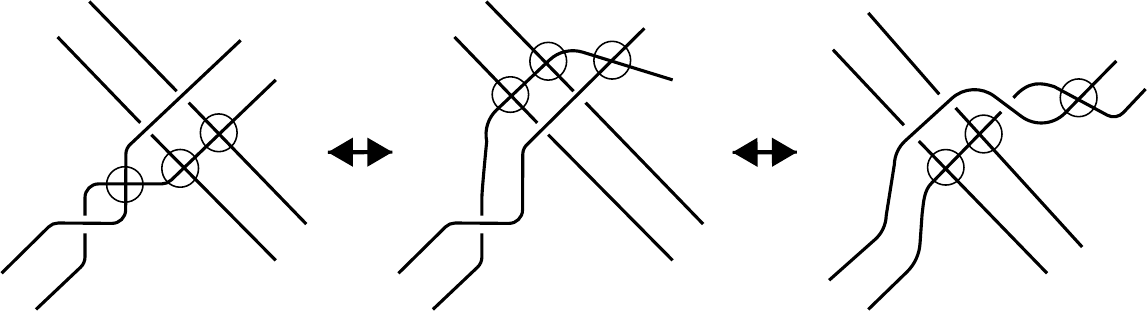}
}
\caption{}
\label{fig:proof1}
\end{figure}

By this lemma, we may slide a $-1v$-twist or a $v1$-twist to any position along the parallel copy of the original diagram without changing the welded link type.
For example, the two diagrams in Figure~\ref{fig:exequiv1} represent the same welded link.

\begin{figure}[h]
\centerline{
\includegraphics[width=8cm]{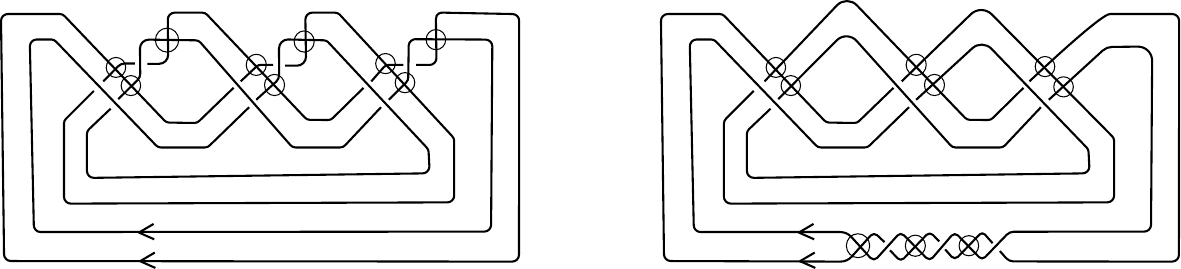}
}
\caption{}
\label{fig:exequiv1}
\end{figure}

\begin{proof}[Proof of Theorem~\ref{thm:mainA}]
It is shown in Theorem~1.2 of \cite{rpolyak} that
any oriented Reidemeister move can be realized as a sequence of the five oriented Reidemeister moves
$\Omega1a$, $\Omega1c$, $\Omega2c$, $\Omega2d$, and $\Omega3b$.
We show that if $D$ and $D'$ are related by any of these moves, then
$\phi_+(D)$ and $\phi_+(D')$ are equivalent as welded links.
Figure~\ref{fig:proofR1} illustrates the cases of $\Omega1a$ and $\Omega1c$,
Figure~\ref{fig:proofR2} the cases of $\Omega2c$ and $\Omega2d$,
and Figure~\ref{fig:proofR3} the case of $\Omega3b$,
where Lemma~\ref{lem:twistpass} is used.

\begin{figure}[h]
\centerline{
\includegraphics[width=8cm]{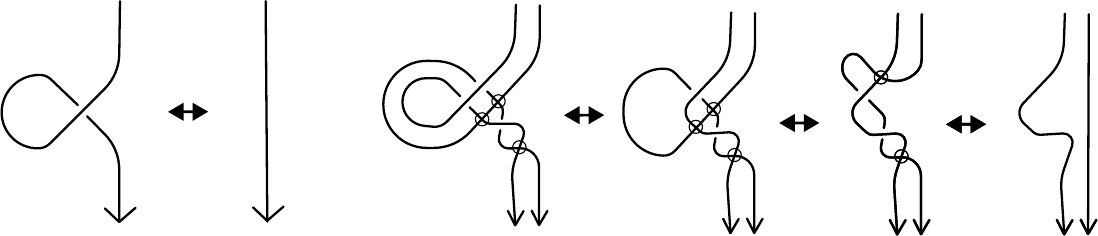}
}
\vspace{0.2cm}
\centerline{
\includegraphics[width=8cm]{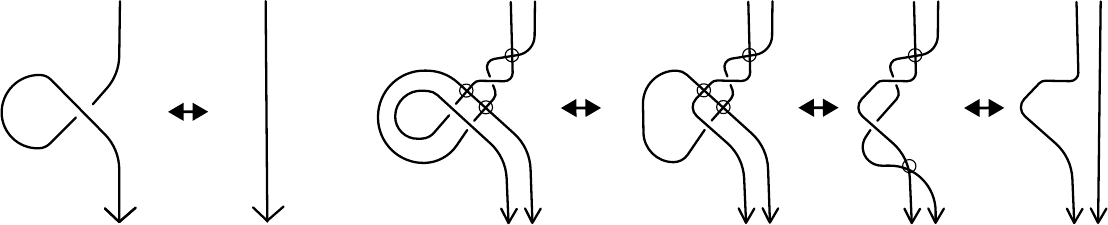}
}
\caption{}
\label{fig:proofR1}
\end{figure}

\begin{figure}[h]
\centerline{
\includegraphics[width=8cm, height=2cm]{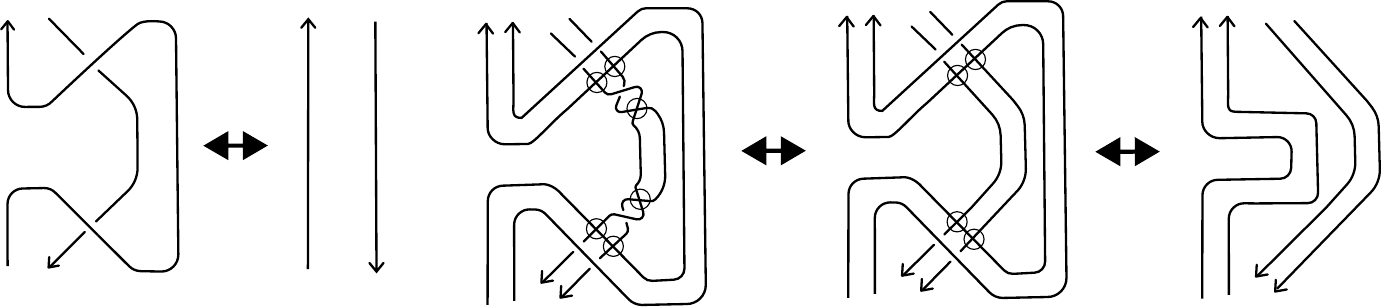}
}
\vspace{0.2cm}
\centerline{
\includegraphics[width=8cm, height=2cm]{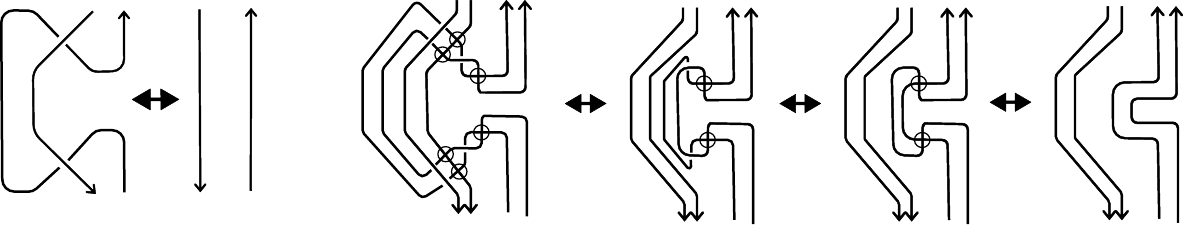}
}
\caption{}
\label{fig:proofR2}
\end{figure}

\begin{figure}[h]
\centerline{
\includegraphics[width=10cm]{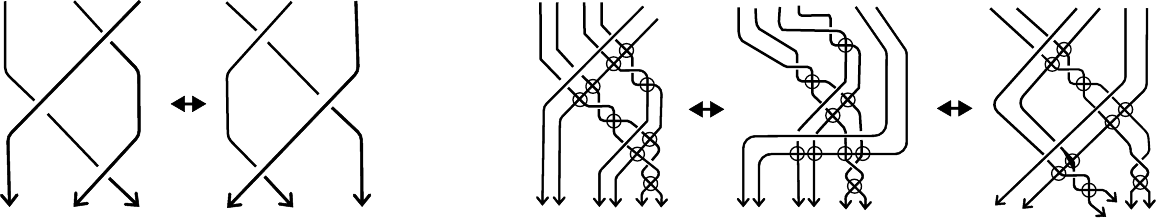}
}
\caption{}
\label{fig:proofR3}
\end{figure}

It is clear that
if $D$ and $D'$ are related by any of the virtual Reidemeister moves or welded moves,
then $\phi_+(D)$ and $\phi_+(D')$ are equivalent as welded links.
By the same argument, we also see that
$\phi_-(D)$ and $\phi_-(D')$ are equivalent as welded links.
\end{proof}


\section{On Alexander numberings and checkerboard colorings}

Let $D$ be a virtual link diagram.
An \emph{Alexander numbering} of $D$ is an assignment of elements of $\mathbb{Z}$ to all arcs of $D$, as illustrated in Figure~\ref{fig:AlexanderNumbering}. 

\begin{figure}[h]
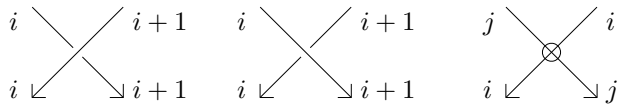

\centerline{
\hspace{0cm}\alexnum{.6mm}\hspace{.6cm}\alexnumv{.6mm}
}
\caption{The condition for an Alexander numbering}
\label{fig:AlexanderNumbering}
\end{figure}

Every classical link diagram admits an Alexander numbering.
However, there exist virtual link diagrams that do not. 
A virtual link or welded link is said to be \emph{almost classical} if it can be represented by a diagram that admits an Alexander numbering (cf. \cite{rboden, rsilver}).

\begin{thm}\label{thm:AlexanderNumbering}
For any virtual link diagram $D$, the parallel diagram $\phi_-(D)$ admits an Alexander numbering.
Thus, the welded link $\phi_-(D)$ is almost classical.
\end{thm}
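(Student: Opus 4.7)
The plan is to construct an Alexander numbering on $\phi_-(D)$ explicitly. I will use the dual description of an Alexander numbering as an integer-valued labeling of the complementary regions of the diagram, in which crossing an oriented arc from its right side to its left side increases the label by $1$. In this formulation, the condition around any individual crossing (classical or virtual) is satisfied automatically by orientation balance, and the obstruction to existence is purely global: the total label change around every loop in the plane must vanish.

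I fix an unbounded region of $\phi_-(D)$, assign it the label $0$, and propagate labels according to the rule above. To verify well-definedness it suffices to check that the total label change is zero around two types of loops: (i) a loop that travels along a doubled arc of $\phi_-(D)$ between two consecutive replacement tangles, and (ii) the loop encircling each replacement tangle.

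For loops of type (i), the anti-parallel orientation of the two strands of each doubled arc is precisely what makes the label well-defined: crossing such a pair transversely produces cancelling contributions $+1$ and $-1$. Consequently the thin strip between the two strands carries the same label as the regions just outside the pair, and traversing a loop along the length of a doubled arc produces no net change. This also allows the consistent labeling to propagate coherently between distinct replacement tangles along the parallel strands.

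For loops of type (ii), I would carry out a case-by-case verification for each of the three local replacement tangles of $\phi_-$ (corresponding to a positive classical crossing, a negative classical crossing, and a virtual crossing of $D$, as depicted in Figure~\ref{fig:phinegative}). The opposite choice of orientation in $\phi_-$, as compared with $\phi_+$, is precisely what causes the contributions from the internal classical and virtual crossings of the tangle to cancel pairwise. In $\phi_+$ the analogous contributions would accumulate to a nonzero total, which explains why $\phi_+(D)$ admits a checkerboard coloring instead.

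The main obstacle is this local case-by-case verification inside the three replacement tangles, since each tangle contains several classical and virtual crossings introduced by the construction, and the label changes must be tallied carefully to see that the anti-parallel orientations enforce the needed cancellation. Once these local checks are complete, the propagation yields a well-defined Alexander numbering on $\phi_-(D)$, establishing that the welded link $\phi_-(D)$ is almost classical.
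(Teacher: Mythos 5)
There is a genuine gap, and it lies in your very first step: the ``dual description'' you adopt is \emph{not} equivalent to an Alexander numbering for virtual link diagrams. A labeling of the complementary planar regions in which crossing an oriented arc from right to left increases the label by $1$ exists for \emph{every} diagram: as you yourself note, the four contributions around any crossing (classical or virtual) cancel in pairs, and since the cycle space of loops in the plane transverse to the diagram is generated by these small vertex loops, there is no further global obstruction -- your checks (i) and (ii) are vacuously satisfied, for $\phi_+(D)$ and indeed for any diagram whatsoever. If this region labeling were equivalent to an Alexander numbering, every virtual link diagram would be almost classical, which is false. The equivalence breaks down exactly at virtual crossings: the defining condition of an Alexander numbering (Figure~\ref{fig:AlexanderNumbering}, right) is that an arc's label passes through a virtual crossing \emph{unchanged}, whereas the label of the region to the right of that arc jumps by $\pm 1$ there (by the amount contributed by the transverse strand). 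This is precisely the condition your argument never engages with, and it is the one that matters: $\phi_-(D)$ contains many virtual crossings (four for each virtual crossing of $D$, plus those in the $-1v$- and $v1$-twists inserted near each classical crossing). Relatedly, your claim that for $\phi_+$ the contributions ``accumulate to a nonzero total'' is false in your own formulation, and the thin strip between the two strands of a doubled arc carries a label differing by $1$ from the outside regions, not the same label.

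The paper's proof works directly with arc labels: assign one fixed integer $i$ to every arc of $\phi_-(D)$ lying outside the replacement tangles of the classical crossings of $D$, and verify a single explicit local picture inside each such tangle (Figure~\ref{fig:proofAlexander}). The anti-parallel orientation is used there to ensure that the labels, after being forced up and down by the four classical crossings of the tangle, return to $i$ on all eight boundary strands (with parallel orientations they return to $i$ only modulo $2$, which is what yields Corollary~\ref{cor:AlexanderNumbering}). To repair your argument you would either have to switch to this direct arc-labeling check, or replace the planar region labeling by one on the associated abstract link diagram (where virtual crossings are not genuine intersections), in which case the existence of the labeling is no longer automatic and the real verification begins.
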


\begin{proof}
Let $i$ be any fixed integer.
Near each classical crossing of $D$, assign integers to the arcs as shown in Figure~\ref{fig:proofAlexander}.
Assign the integer $i$ to all remaining arcs.
This produces an Alexander numbering for $\phi_-(D)$.
\end{proof}

\begin{figure}[h]
\centerline{
\includegraphics[width=10cm]{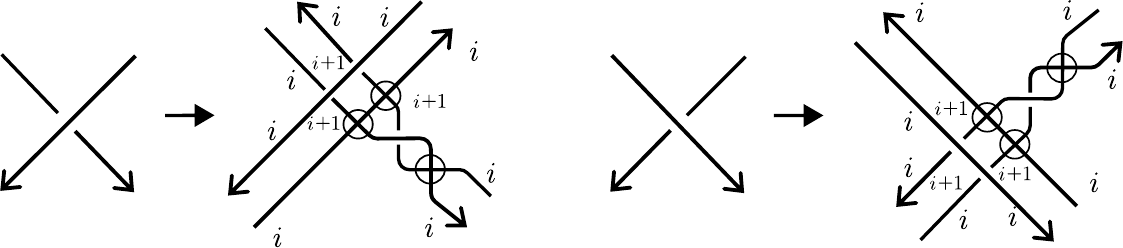}
}
\caption{}
\label{fig:proofAlexander}
\end{figure}

A \emph{mod-$2$ Alexander numbering} of a diagram $D$ is an assignment of elements of $\mathbb{Z}/2\mathbb{Z}$ to all arcs of $D$, again as illustrated in Figure~\ref{fig:AlexanderNumbering}.

A virtual or welded link is said to be \emph{checkerboard colorable}
or \emph{mod-$2$ almost classical} \cite{rCSWE}
if it can be represented by a diagram that admits a mod-$2$ Alexander numbering.
(The original definition of checkerboard colorability for virtual links, as given in \cite{rkn0}, requires that the virtual link be represented by an abstract link diagram \cite{rkk} that admits a checkerboard coloring.
This condition is equivalent to the existence of a virtual link diagram that admits a mod-$2$ Alexander numbering.)

\begin{cor}\label{cor:AlexanderNumbering}
For any virtual link diagram $D$, the parallel diagram $\phi_+(D)$ admits a mod-$2$ Alexander numbering.
Thus, the welded link $\phi_+(D)$ is checkerboard colorable.
\end{cor}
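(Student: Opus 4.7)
The plan is to deduce the existence of a mod-$2$ Alexander numbering on $\phi_+(D)$ from the integer Alexander numbering on $\phi_-(D)$ supplied by Theorem~\ref{thm:AlexanderNumbering}. First I would observe that $\phi_+(D)$ and $\phi_-(D)$ have identical underlying virtual link diagrams: the same arcs, the same classical crossings, and the same virtual crossings in the same spatial positions. The only difference, as emphasized in Section~\ref{sect:const}, is that the secondary subdiagram in $\phi_+(D)$ runs parallel to the orientation of $D$, whereas in $\phi_-(D)$ it runs oppositely.

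Next I would verify that the mod-$2$ Alexander numbering condition illustrated in Figure~\ref{fig:AlexanderNumbering} is insensitive to the orientations of the strands. At a virtual crossing, the condition only requires that opposite arcs carry equal labels, which makes no reference to orientation. At a classical crossing, the integer condition requires the two under-arcs (equivalently, the regions on the two sides of the under-strand) to have labels differing by $+1$ or $-1$ depending on how the over-strand is oriented; but $+1 \equiv -1 \pmod 2$, so modulo $2$ the condition collapses to ``the labels differ by $1$,'' which is manifestly orientation-free.

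Given these two observations, the proof is short: take the Alexander numbering $\lambda$ of $\phi_-(D)$ provided by Theorem~\ref{thm:AlexanderNumbering} and reduce its values modulo $2$. The resulting $\mathbb{Z}/2\mathbb{Z}$-valued labeling satisfies the mod-$2$ condition on the common underlying unoriented diagram, and hence on $\phi_+(D)$ as well. The conclusion that $\phi_+(D)$ represents a checkerboard colorable welded link then follows from the definition.

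I do not anticipate a serious obstacle. The only delicate point is confirming the orientation-invariance of the mod-$2$ condition at classical crossings, and this is essentially automatic from $+1 \equiv -1 \pmod{2}$. If a fully self-contained argument were preferred, one could instead mimic the proof of Theorem~\ref{thm:AlexanderNumbering} directly, working with the local picture of $\phi_+$ at each classical crossing of $D$ (in analogy with Figure~\ref{fig:proofAlexander}), assigning mod-$2$ values near each crossing and extending by a constant on the remaining arcs.
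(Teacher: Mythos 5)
Your proposal is correct and follows essentially the same route as the paper: reduce the integer Alexander numbering of $\phi_-(D)$ from Theorem~\ref{thm:AlexanderNumbering} modulo $2$, then transfer it to $\phi_+(D)$ using the fact that the two parallel diagrams differ only in orientation. Your explicit check that the mod-$2$ condition is orientation-insensitive (via $+1 \equiv -1 \pmod 2$) is a detail the paper leaves implicit, but the argument is the same.
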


\begin{proof}
By Theorem~\ref{thm:AlexanderNumbering}, the parallel diagram $\phi_-(D)$ admits an Alexander numbering, and therefore it also admits a mod-$2$ Alexander numbering.
Because $\phi_+(D)$ and $\phi_-(D)$ differ only in orientation, a mod-$2$ Alexander numbering for $\phi_-(D)$ is also valid for $\phi_+(D)$.
\end{proof}

\section{On quandle colorings and the fundamental quandles}

A \emph{quandle} is a set $X$ equipped with a binary operation $*: X \times X \to X$ satisfying the following conditions \cite{rJoyce, rMatveev}:
\begin{itemize}
\item[(i)] For any $x \in X$, $x * x = x$.
\item[(ii)] For any $x, y \in X$, there exists a unique element $z \in X$ such that $z * y = x$.
\item[(iii)] For any $x, y, z \in X$, $(x * y) * z = (x * z) * (y * z)$.
\end{itemize}

The second condition may be replaced by the following \cite{rJoyce}:
\begin{itemize}
\item[(ii$'$)] There exists a binary operation $ \overline{*}: X \times X \to X$ such that for any $x, y \in X$,
$(x * y) \, \overline{*} \, y = x$ and $(x \, \overline{*} \, y) * y = x$.
\end{itemize}

An \emph{$X$-coloring} of a diagram $D$ is an assignment of elements of $X$ to the arcs of $D$ such that at each classical crossing the condition $a * b = c$ holds, where $a$, $b$, and $c$ are the elements assigned to the arcs as depicted in Figure~\ref{fig:xcoloring0}.
(Arcs passing through virtual crossings retain their assigned elements.)

\begin{figure}[h]
\centerline{
\includegraphics[width=4cm]{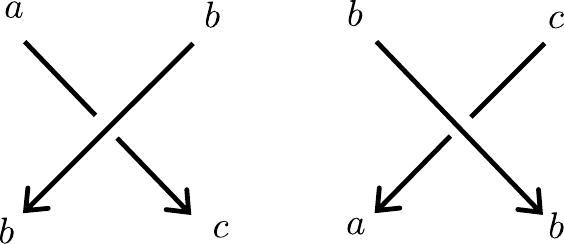}
}
\caption{The quandle coloring condition: $a * b = c$}
\label{fig:xcoloring0}
\end{figure}

An $X$-coloring is said to be \emph{trivial} if all arcs are assigned the same element of $X$. 

For a diagram $D$, let $\textrm{Col}_X(D)$ denote the set of all $X$-colorings of $D$. 

Fenn, Rim\'anyi, and Rourke \cite{FRR} investigated the fundamental racks and quandles of welded braids.
From their work, we obtain the following theorem (Kauffman proved it for virtual links in \cite{rkauD}):

\begin{thm}[Fenn, Rim\'anyi and Rourke \cite{FRR}]\label{thm:quandle}
If $D$ and $D'$ are equivalent as welded links, then
$\# \, \textrm{Col}_X(D) = \# \, \textrm{Col}_X(D')$.
\end{thm}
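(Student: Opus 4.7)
The plan is to exhibit, for each move in the generating set of welded equivalence, a bijection between the sets of $X$-colorings of the two diagrams. Because each such move is supported in a disk, it suffices to produce, for each local move, a bijection between the $X$-colorings of the two tangle pictures that agree on the boundary arcs; extending by the identity outside then yields a bijection $\textrm{Col}_X(D) \to \textrm{Col}_X(D')$ and hence the claimed equality of cardinalities. Chaining these bijections along any finite sequence of moves relating $D$ to $D'$ completes the argument.

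For the classical oriented Reidemeister moves, by Theorem~1.2 of \cite{rpolyak} it is enough to treat the five moves $\Omega1a$, $\Omega1c$, $\Omega2c$, $\Omega2d$, and $\Omega3b$. Each of these corresponds precisely to one of the quandle axioms: axiom (i) handles $\Omega1a$ and $\Omega1c$ (the assignment $x \mapsto x*x$ equals $x$, so a loop created or destroyed carries a forced coloring), axiom (ii) or equivalently the existence of the dual operation $\overline{*}$ in (ii$'$) handles $\Omega2c$ and $\Omega2d$ (a strand crossed twice returns to its original color), and the self-distributivity axiom (iii) handles $\Omega3b$. In each case the bijection is the obvious one: for any $X$-coloring of the boundary of the local tangle, the relevant axiom produces a unique coloring of the interior arcs on each side, and these match.

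The virtual Reidemeister moves require no work, since at a virtual crossing the two strands are not broken and their colors pass through independently. Thus for each virtual move the colorings on the two sides are identified tautologically by following arcs across the local disk.

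The main obstacle is the welded move of Figure~\ref{fig:fmove}, which is the distinctive relation of welded theory and is not built into the quandle axioms by design. Writing $a$, $b$, $c$ for the three incoming colors on the top of the tangle, a direct local computation expresses the outgoing colors on the bottom of each side as words in $a$, $b$, $c$ and the operations $*$, $\overline{*}$; the verification that the two tuples of output words agree reduces to an instance of the right self-distributivity axiom (iii). Hence axiom (iii) again furnishes the desired bijection, and the case analysis is complete.
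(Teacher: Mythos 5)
The paper does not prove this statement; it is quoted from Fenn--Rim\'anyi--Rourke \cite{FRR} (with Kauffman credited for the virtual-link case), so there is no internal proof to compare against. Your move-by-move verification is the standard argument and is essentially correct: locality of the generating moves, the correspondence between the quandle axioms and the oriented Reidemeister moves, and the tautological treatment of the virtual moves are all as they should be. One detail is mischaracterized, though it does not break the proof. In the welded move of Figure~\ref{fig:fmove} the strand not involved in the virtual crossing is the \emph{over}-strand at both classical crossings, so its color $c$ is never altered; each of the other two strands is acted on exactly once by $c$ (via $*$ or $\overline{*}$ according to sign), and the virtual crossing merely transposes their positions. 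The two output tuples are therefore literally identical words --- no appeal to the self-distributivity axiom (iii) is needed, and in fact (iii) is not the identity at play. (Axiom (iii) is what you need for the genuinely classical $\Omega3b$, where one strand is acted on twice in different orders on the two sides; the whole point of the welded/forbidden move being compatible with quandle colorings, while its ``under'' counterpart is not, is precisely that the doubly-crossed strand here is the unchanging over-strand.) If you carry out the ``direct local computation'' you promise, you will find equality on the nose, so the bijection exists as claimed; just correct the stated reason. A final small remark: invoking Polyak's generating set is harmless but unnecessary here, since every oriented classical Reidemeister move is handled directly by the axioms (using (ii$'$) for both orientations of R2 and $x\,\overline{*}\,x=x$, a consequence of (i) and (ii$'$), for the remaining R1 variants).
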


\begin{thm}
Assume $\# X \ge 2$.
For any welded link diagram $D$,
the parallel diagram $\phi_+(D)$ admits a nontrivial $X$-coloring.
The same holds for $\phi_-(D)$.
\end{thm}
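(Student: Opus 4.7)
The plan is to produce an explicit nontrivial $X$-coloring of $\phi_+(D)$ by exploiting the decomposition $\phi_+(D) = \phi_+(D)_{\rm R} \cup \phi_+(D)_{\rm L}$ recorded in Section~\ref{sect:const}. The crucial feature of the construction is that every classical crossing of $\phi_+(D)$ is confined to the main subdiagram $\phi_+(D)_{\rm R}$, while $\phi_+(D)_{\rm L}$ contains only virtual crossings and meets $\phi_+(D)_{\rm R}$ only at virtual crossings. Since arcs are subdivided only at undercrossings of classical crossings, this forces each arc of $\phi_+(D)$ to lie entirely in one of the two subdiagrams.

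First, I would pick two distinct elements $a, b \in X$, which is possible since $\#X \geq 2$. Then I would define a coloring by assigning the color $a$ to every arc contained in $\phi_+(D)_{\rm R}$ and the color $b$ to every arc contained in $\phi_+(D)_{\rm L}$. To verify this gives an $X$-coloring, I would check the quandle condition at each crossing: virtual crossings impose no constraint, while at every classical crossing all three incident arcs lie in $\phi_+(D)_{\rm R}$ and are therefore colored by $a$, so the condition reads $a * a = a$, which holds by axiom (i). Nontriviality is immediate from $a \neq b$, provided that $D$ is a nonempty diagram so that both subdiagrams contain at least one arc. The argument for $\phi_-(D)$ is identical, since $\phi_-(D)$ and $\phi_+(D)$ share the same underlying unoriented diagram and differ only in the orientation of the secondary subdiagram, so the crossing and arc structure is the same.

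I do not anticipate any substantive obstacle. The only step that deserves explicit verification is the assertion that no arc of $\phi_+(D)$ can cross from $\phi_+(D)_{\rm R}$ into $\phi_+(D)_{\rm L}$, but this follows at once from the fact that arcs are broken precisely at undercrossings of classical crossings, all of which are located in $\phi_+(D)_{\rm R}$; the interactions between the two subdiagrams are purely virtual and leave arcs intact. In effect, the theorem reduces to the observation that the secondary subdiagram acts as an independent trivial component from the standpoint of quandle colorings, and can therefore be painted by any element disjoint from the color used on the main subdiagram.
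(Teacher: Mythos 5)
Your argument rests on the claim that every classical crossing of $\phi_+(D)$ has all three of its incident arcs in the main subdiagram $\phi_+(D)_{\rm R}$, so that the quandle condition only ever reads $a*a=a$. That claim is false, and it is the crux of the matter. The statement that $\phi_+(D)_{\rm L}$ ``contains only virtual crossings'' refers to the self-crossings of the secondary subdiagram; the two subdiagrams also cross \emph{each other}, and some of those mutual crossings are classical. The paper says so explicitly in the section on the splitting problem (``at each classical crossing between $\phi_+(D)_{\rm R}$ and $\phi_+(D)_{\rm L}$, the arc of $\phi_+(D)_{\rm R}$ passes over the arc of $\phi_+(D)_{\rm L}$'') and computes a nonzero linking number $\textrm{lk}(d_1,d_1^\ast)=-1/2$ between a component of $\phi_+(D)_{\rm R}$ and one of $\phi_+(D)_{\rm L}$, which would be impossible if all mutual crossings were virtual. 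The same fact is what produces the relation $y^L=y$ in Theorem~\ref{thm:fundquandle}: the secondary strand is acted on by the longitude precisely because it passes under the main strand at classical crossings. At such a crossing your coloring forces $b*a=b$ (or $b\,\overline{*}\,a=b$), which fails in a general quandle: in the dihedral quandle of order $3$, where $x*y=2y-x$, one has $b*a\neq b$ whenever $a\neq b$. So the constant color $b$ on $\phi_+(D)_{\rm L}$ does not define an $X$-coloring in general, and your closing remark that the secondary subdiagram is ``an independent trivial component from the standpoint of quandle colorings'' contradicts the entire point of the splitting section.

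The repair is close to what you wrote but requires propagation rather than a constant color: assign $a$ to every arc of $\phi_+(D)_{\rm R}$, start the secondary strand at some $b\neq a$, and let its color change by $*\,a$ or $\overline{*}\,a$ each time it passes under the main subdiagram. This is consistent around each component because the algebraic count of those undercrossings is zero (the compensating $-1v$- and $v1$-twists in the construction make the relevant longitude preferred, i.e.\ of total exponent sum zero), and it is nontrivial because $x\mapsto x*a$ and $x\mapsto x\,\overline{*}\,a$ are bijections fixing $a$ and hence preserve $X\setminus\{a\}$, so every arc of the secondary subdiagram receives a color different from $a$. This is, in effect, the coloring the paper exhibits in Figure~\ref{fig:xcoloring2}.
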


\begin{proof}
See Figure~\ref{fig:xcoloring2} for the case of $\phi_+(D)$.
Reversing the orientation of the secondary subdiagram of $\phi_+(D)$ yields $\phi_-(D)$.
The coloring shown in Figure~\ref{fig:xcoloring2} still defines an $X$-coloring of $\phi_-(D)$.
\end{proof}

\begin{figure}[h]
\centerline{
\includegraphics[width=10cm]{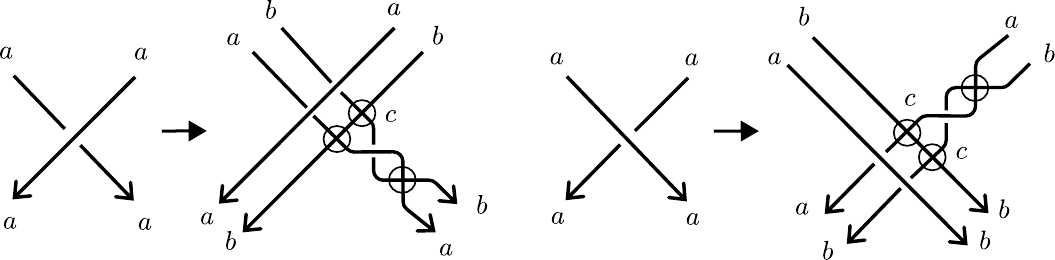}
}
\caption{A quandle coloring}
\label{fig:xcoloring2}
\end{figure}

\begin{thm}
Let $D$ be a diagram and $C$ an $X$-coloring of $D$ by a quandle $X$.
Then the parallel diagram $\phi_+(D)$ admits an $X$-coloring $\tilde{C}$ such that
the colors assigned to the arcs of the main subdiagram $\phi_+(D)_{\rm R}$ by $\tilde{C}$ coincide with those assigned to the corresponding arcs of $D$ by $C$.
The same holds for $\phi_-(D)$.
\end{thm}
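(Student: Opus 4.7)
The plan is to extend the given coloring $C$ from the main subdiagram to the full parallel diagram by coloring the secondary subdiagram arbitrarily, and to observe that the quandle condition is automatically satisfied because no classical crossing of $\phi_+(D)$ involves an arc of the secondary subdiagram.

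First I would revisit the local models in Figure~\ref{fig:phipositive} and record the following structural feature of $\phi_+(D)$: every classical crossing of $\phi_+(D)$ lies in the main subdiagram $\phi_+(D)_{\rm R}$ and corresponds bijectively to a classical crossing of $D$. Consequently, both the internal crossings of $\phi_+(D)_{\rm L}$ and all crossings between a main arc and a secondary arc are virtual. Combined with the fact that $\phi_+(D)_{\rm R}$ is a copy of $D$, this gives a canonical bijection between the arcs of $\phi_+(D)_{\rm R}$ and the arcs of $D$, while each connected component of $\phi_+(D)_{\rm L}$ is a single arc, since virtual crossings do not break arcs.

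Next I would define $\tilde{C}$ as follows: color each arc of $\phi_+(D)_{\rm R}$ by the color $C$ assigns to the corresponding arc of $D$, and assign each connected component of $\phi_+(D)_{\rm L}$ an arbitrary element of $X$, independently of the other choices. To verify that this is a valid $X$-coloring, one checks the quandle relation at every classical crossing. By the structural observation, each classical crossing of $\phi_+(D)$ corresponds to one of $D$ and involves only arcs of $\phi_+(D)_{\rm R}$, so the required relation coincides with the one already satisfied by $C$ at the corresponding crossing of $D$.

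Finally, for $\phi_-(D)$ the identical recipe works: the only difference between $\phi_+(D)$ and $\phi_-(D)$ is the orientation of the secondary subdiagram, and since secondary arcs participate only in virtual crossings, this orientation change is immaterial for the quandle relations. The main obstacle I anticipate is establishing the structural observation above with complete certainty, i.e., confirming from the local models (positive, negative, and virtual crossings) that no classical crossing of $\phi_\pm(D)$ involves a secondary arc; this reduces to a finite case check by direct inspection of Figures~\ref{fig:phipositive} and~\ref{fig:phinegative}.
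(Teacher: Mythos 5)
Your proof rests on a structural claim that is false for this construction: you assert that every classical crossing of $\phi_+(D)$ lies inside the main subdiagram and that all crossings between a main arc and a secondary arc are virtual. That is not how $\phi_\pm$ is built. Only the \emph{self}-crossings of the secondary subdiagram are all virtual; the secondary strands do meet the main strands in genuine classical crossings. The paper states this explicitly in the splitting section (``At each classical crossing between $\phi_+(D)_{\rm R}$ and $\phi_+(D)_{\rm L}$, the arc of $\phi_+(D)_{\rm R}$ passes over the arc of $\phi_+(D)_{\rm L}$''), and it is forced by everything else in the paper: the example there computes $\textrm{lk}(d_1,d_1^\ast)=-1/2\neq 0$, which is a signed count of classical crossings between the two subdiagrams; the local models of Figure~\ref{fig:phipositive} contain $\pm1v$-twists (Figure~\ref{fig:twists}), each of which is a classical crossing between a main arc and its parallel secondary arc; and the relation $y^L=y$ in Theorem~\ref{thm:fundquandle} exists precisely because the secondary strand passes \emph{under} the main strand as it travels along the longitude. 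If your structural claim were true, the secondary component could be detoured off and the entire splitting section would be vacuous. Relatedly, each component of $\phi_+(D)_{\rm L}$ is not a single arc: it is cut into arcs $a_1',\dots,a_m'$ by the classical undercrossings, as in the proof of Theorem~\ref{thm:fundquandle}.

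Because of this, assigning an \emph{arbitrary} element of $X$ to the secondary component does not give an $X$-coloring: each time a secondary arc colored $x$ passes under a main arc colored $b$, the outgoing arc must be colored $x*b$ (or $x\,\overline{*}\,b$), so the secondary colors are constrained by $C$, and closing up around a component imposes a nontrivial monodromy condition. The coloring the paper actually uses (Figure~\ref{fig:xcoloringcbk}) gives each secondary arc the same color as the main arc it runs parallel to; one then checks the local models directly: at a crossing of $D$ the secondary under-strand passes under the same over-arc $b$ as the main under-strand, so its color changes from $a$ to $a*b$ in step with its parallel main arc, and at a twist both strands carry the same color $x$, so the quandle axiom $x*x=x$ guarantees the color is unchanged. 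That verification --- not the claim that no relations need checking --- is the content of the proof, so as written your argument has a genuine gap.
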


\begin{proof}
See Figure~\ref{fig:xcoloringcbk} for the case of $\phi_+(D)$.
After reversing the orientation of the secondary subdiagram of $\phi_+(D)$, we obtain $\phi_-(D)$, and the coloring shown in Figure~\ref{fig:xcoloringcbk} again yields an $X$-coloring of $\phi_-(D)$.
\end{proof}

\begin{figure}[h]
\centerline{
\includegraphics[width=10cm]{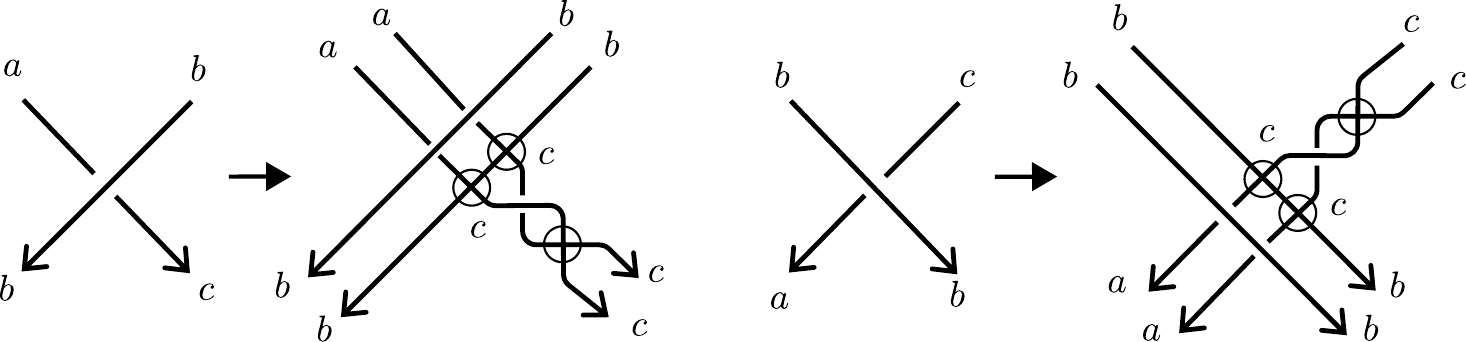}
}
\caption{The quandle coloring $\tilde{C}$ of $\phi_+(D)$}
\label{fig:xcoloringcbk}
\end{figure}

Assume that $D$ is a virtual knot diagram, that is, a one-component virtual link diagram.
Let
\[
\langle x_1, \dots, x_m  \mid  r_1, \dots, r_n \rangle_{\rm qdle}
\]
be a presentation of the fundamental quandle of $D$
(see \cite{rFR, rJoyce, rKamadaSeiichi2017} for such presentations). 

Let $L$ be a preferred longitude of $D$, represented by a word in ${x_1, \dots, x_m}$ with total exponent sum zero.
(It is regarded as an element of the associated group ${\rm As}(Q(D))$, naturally identified with the knot group $G(D)$ obtained from the Wirtinger presentation of $D$.)

\begin{thm}\label{thm:fundquandle}
Let $D$ be a virtual knot diagram with quandle presentation
\[
\langle x_1, \dots, x_m  \mid  r_1, \dots, r_n \rangle_{\rm qdle}, 
\]
and let $L$ be a preferred longitude.
Then the fundamental quandle of $\phi_+(D)$ has the presentation
\[
\langle x_1, \dots, x_m, y   \mid  r_1, \dots, r_n,   y^L =y \rangle_{\rm qdle}.
\]
\end{thm}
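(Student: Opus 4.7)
The plan is to derive the presentation by writing down a Wirtinger-style presentation of the fundamental quandle of $\phi_+(D)$ directly from its arcs and classical crossings, and then simplifying via Tietze transformations.

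First, I would classify the generators and relations of $\phi_+(D)$. The arcs of the main subdiagram $\phi_+(D)_{\rm R}$ are in bijection with those of $D$ and provide the generators $x_1, \dots, x_m$ together with the relations $r_1, \dots, r_n$, since $\phi_+(D)_{\rm R}$ is a copy of $D$. Because $\phi_+(D)_{\rm L}$ contains only virtual self-crossings, no relation comes from crossings entirely within the secondary subdiagram. A local inspection of Figure~\ref{fig:phipositive} at each classical crossing $c$ of $D$ with over-arc $x_j$, incoming under-arc $x_i$, and outgoing under-arc $x_k$ shows that the only classical crossing between a main and a secondary arc is the one at which the main over-arc $x_j$ passes over the secondary of the under-arc, breaking only the secondary under-arc (which keeps the main subdiagram an unaltered copy of $D$). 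This introduces one secondary generator $y_k$ for each arc $x_k$ of $D$, together with the relation $y_k = y_i * x_j$ when $c$ is positive and $y_k = y_i \, \overline{*} \, x_j$ when $c$ is negative.

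Since $D$ is a knot, I would next traverse its oriented loop once and iteratively apply these relations to eliminate every secondary generator except one, called $y$. Closing up the loop leaves a single residual relation $y^W = y$, where $W = x_{j_1}^{\epsilon_1} \cdots x_{j_N}^{\epsilon_N}$ records the sequence of main over-arcs and signs encountered by the secondary strand as it travels around $D$. The word $W$ is exactly the longitude word of $D$ read off from the Wirtinger presentation at the chosen basepoint, so $W$ and the preferred longitude $L$ differ only by a power $y^k$ of the meridian at $y$. Since $y * y = y$ in any quandle, the relation $y^W = y$ is equivalent to $y^L = y$, and the resulting presentation becomes
\[
\langle x_1, \dots, x_m, y \mid r_1, \dots, r_n, y^L = y \rangle_{\rm qdle},
\]
as required.

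The main obstacle will be the first step: reading off precisely from Figure~\ref{fig:phipositive} which classical main-secondary crossings are produced at each classical crossing of $D$, with which signs, and verifying that all remaining main-secondary crossings in the parallel diagram are virtual so that no additional relations are introduced. Once this local verification is done, the rest of the argument is a direct Tietze computation.
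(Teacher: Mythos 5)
Your overall strategy --- read off a Wirtinger-style presentation of $Q(\phi_+(D))$, keep the main generators $x_1,\dots,x_m$ and relations $r_1,\dots,r_n$, introduce one secondary generator per secondary arc, eliminate all but one of them by traversing the knot, and end with a single closing relation $y^{W}=y$ --- is the same as the paper's. The gap lies in your count of the classical crossings between the main and secondary subdiagrams, and in the step reconciling $W$ with the preferred longitude $L$. The construction $\phi_+$ does not merely double each crossing: it also inserts a $-1v$-twist or a $v1$-twist near each classical crossing of $D$ (the tangles of Figure~\ref{fig:twists}; they are what make the construction compatible with Reidemeister I). Each such twist contributes one more classical crossing at which the main strand passes over the secondary strand, so your claim that the only classical main--secondary crossing at $c$ is the one where the main over-arc crosses the secondary copy of the under-arc is incomplete. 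If the twists are dropped, the word $W$ you read off is the naive longitude word, whose exponent sum equals the writhe $e$ of $D$; it differs from the preferred longitude by a power of $x_1$, the meridian of the \emph{main} arc at the basepoint --- not by a power of $y$. Your final step therefore fails: with $W=Lx_1^{e}$ the relation $y^{W}=y$ reads $y^{L}=y^{x_1^{-e}}$, and the quandle identity $y*y=y$ gives no way to remove the factor $x_1^{-e}$, since $y^{x_1^{k}}\neq y$ in general.

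The paper closes exactly this point with Lemma~\ref{lem:twistpass}: the twists may be slid along the parallel strands without changing the welded link type, so one first moves all of them to a neighborhood of the terminal point of $L$. After this normalization the secondary strand passes under the main strand exactly along the preferred longitude (the accumulated twists supply the $x_1^{-e}$ correction at the basepoint), and the closing relation is $y^{L}=y$ on the nose. To repair your argument you should (i) include the twist crossings in the local inspection of Figure~\ref{fig:phipositive}, and (ii) either invoke Lemma~\ref{lem:twistpass} to normalize their positions, or argue directly that the word read off with the twists included has total exponent sum zero and hence represents the preferred longitude in ${\rm As}(Q(D))\cong G(D)$.
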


Here the notation $y^L$ follows the convention introduced by Fenn-Rourke \cite{rFR}.
For details, see \cite{rFR, rKamadaSeiichi2017}.

\begin{proof}
By Lemma~\ref{lem:twistpass}, we may assume without loss of generality that in $\phi_+(D)$ all $-1v$-twists or $v1$-twists occurring near classical crossings have been slid to a neighborhood of the terminal point of the longitude $L$ (as in the right-hand diagram of Figure~\ref{fig:exequiv1}).

Let $a_1, \dots, a_m$ denote the arcs of $D$.
The main subdiagram $\phi_+(D)_{\rm R}$ is a copy of $D$, so we use the same arc labels $a_1, \dots, a_m$ for it.
After the above deformation, the secondary subdiagram $\phi_+(D)_{\rm L}$ is parallel to the main subdiagram except near the terminal point of $L$; thus we label its arcs $a_1', \dots, a_m'$, with $a_i'$ parallel to $a_i$.

Let $x_1, \dots, x_m$ be the quandle elements corresponding to the arcs $a_1, \dots, a_m$ of $D$,
and use the same symbols for the corresponding elements in $Q(\phi_+(D))$.
The Wirtinger relations from the classical crossings of $D$ appear unchanged in $Q(\phi_+(D))$.
Let $y$ be the element corresponding to the arc at the initial (equivalently, terminal) point of $L$ in $\phi_+(D)_{\rm L}$.
As $y$ travels along $L$, when it returns to the terminal point it becomes $y^L$, yielding the relation $y^L = y$.
\end{proof}

\begin{exam}\label{exam:trefoilparallel}
Let $D$ be the diagram shown on the left of Figure~\ref{fig:exquandle1b}.

\begin{figure}[h]
\centerline{
\includegraphics[width=8cm]{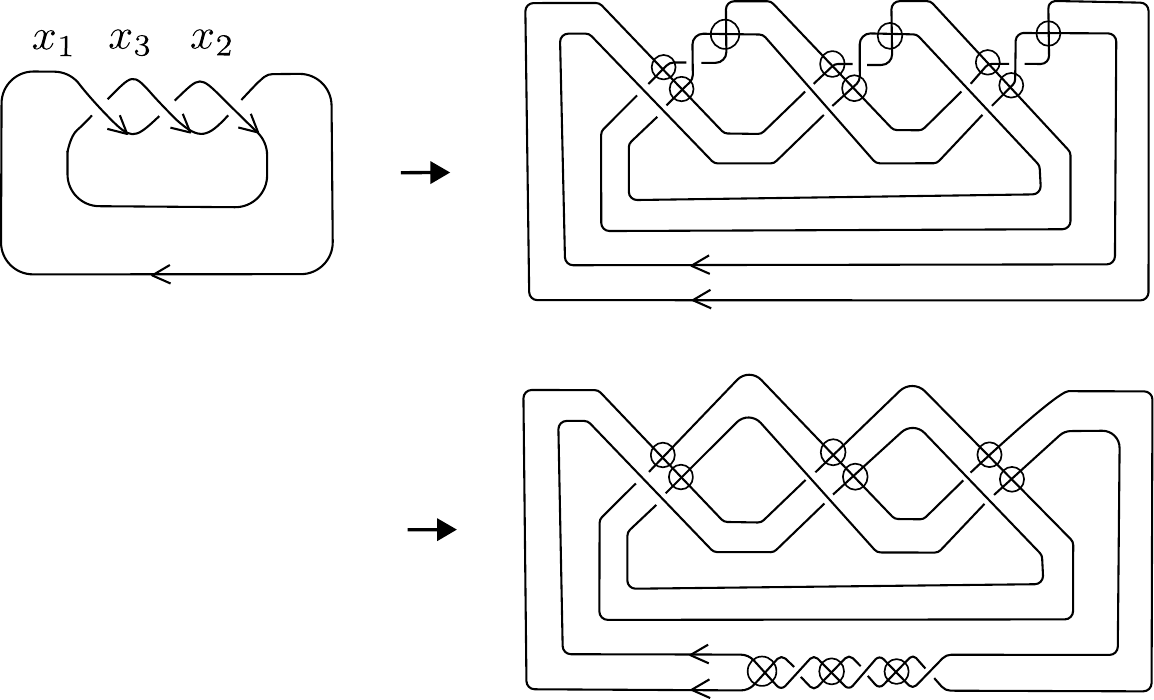}
}
\caption{}
\label{fig:exquandle1b}
\end{figure}

Then the fundamental quandle of $D$ has the presentation
\[
Q(D)=\langle x_1,x_2,x_3 \mid x_1^{x_3}=x_2, x_2^{x_1}=x_3, x_3^{x_2}=x_1\rangle_{\rm qdle},
\]
and preferred longitude $L$ is represented by $L=x_3x_1x_2x_1^{-3}$. 
Then we have 
\[
Q(\phi_+(D))=\langle x_1,x_2,x_3, y \mid x_1^{x_3}=x_2, x_2^{x_1}=x_3, x_3^{x_2}=x_1, y^L=y\rangle_{\rm qdle},
\] 
with $L=x_3x_1x_2x_1^{-3}$. 
\end{exam}

\begin{rmk}
This theorem extends to the case of a $\mu$-component virtual link diagram $D$.
If $L_1, \dots, L_{\mu}$ are chosen preferred longitudes, then the fundamental quandle of $\phi_+(D)$ has the presentation
\[
\langle x_1, \dots, x_m, y_1, \dots, y_\mu   \mid  r_1, \dots, r_n,   {y_i}^{L_i} =y_i (i=1, \dots, \mu) 
\rangle_{\rm qdle}.
\]
\end{rmk}


\section{Splitting Problem}

Let $D$ be a virtual link diagram, and let $\phi_+(D)$ be the parallel diagram of $D$.

At each classical crossing between $\phi_+(D)_{\rm R}$ and $\phi_+(D)_{\rm L}$,
the arc of $\phi_+(D)_{\rm R}$ passes over the arc of $\phi_+(D)_{\rm L}$.
A natural question to ask is whether 
$\phi_+(D) = \phi_+(D)_{\rm R} \cup \phi_+(D)_{\rm L}$ 
is equivalent to the disjoint union of $\phi_+(D)_{\rm R}$ and $\phi_+(D)_{\rm L}$. 

\begin{exam}
Let $D$ be the diagram depicted in Figure~\ref{fig:exhoplink1bk}.
Then $\phi_+(D)$ is not equivalent to the disjoint union of $\phi_+(D)_{\rm R}$ and $\phi_+(D)_{\rm L}$.
For the diagram 
$\phi_+(D) = \phi_+(D)_{\rm R} \cup \phi_+(D)_{\rm L}$, 
write $\phi_+(D)_{\rm R} = d_1 \cup d_2$ and
$\phi_+(D)_{\rm L} = d_1^\ast \cup d_2^\ast$.
Then the linking number $\textrm{lk}(d_1, d_1^\ast)$ is $-1/2$ $(\neq 0)$.
(Here the linking number $\textrm{lk}(D_1, D_2)$ of a 2-component diagram
$D = D_1 \cup D_2$ is defined as half of the sum of the signs of all classical crossings formed by arcs of $D_1$ and $D_2$.)

If $\phi_+(D)$ were equivalent to the disjoint union of $\phi_+(D)_{\rm R}$ and $\phi_+(D)_{\rm L}$, this number would have to be zero.
Thus, $\phi_+(D)$ is not equivalent to the disjoint union of $\phi_+(D)_{\rm R}$ and $\phi_+(D)_{\rm L}$.
\end{exam}

\begin{figure}[h]
\centerline{
\includegraphics[width=8.5cm]{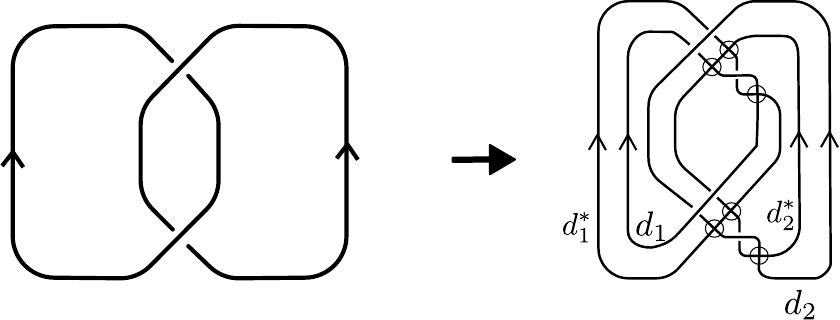}}
\centerline{\phantom{MMMM}$D$\phantom{MMMMMMMMMMMMMM} $\phi_+(D)$}
\caption{}
\label{fig:exhoplink1bk}
\end{figure}

As a corollary to Theorem~\ref{thm:fundquandle}, we obtain the following result.

\begin{prop}\label{prop:splitproblemqaundle}
Let $D$ be a virtual knot diagram, and let
\[
\langle x_1, \dots, x_m  \mid  r_1, \dots, r_n \rangle_{\rm qdle}
\]
be a presentation of its fundamental quandle.
Let $L$ be a preferred longitude.
If the fundamental quandle of $\phi_+(D)$, which has the presentation 
\[
\langle x_1, \dots, x_m, y   \mid  r_1, \dots, r_n,   y^L =y \rangle_{\rm qdle},
\]
is never isomorphic to the quandle
\[
\langle x_1, \dots, x_m, y   \mid  r_1, \dots, r_n \rangle_{\rm qdle},
\]
then 
$\phi_+(D) = \phi_+(D)_{\rm R} \cup \phi_+(D)_{\rm L}$
is not equivalent to the disjoint union of $\phi_+(D)_{\rm R}$ and $\phi_+(D)_{\rm L}$.
\end{prop}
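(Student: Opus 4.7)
The plan is to argue the contrapositive: assume that $\phi_+(D)$ is equivalent as a welded link to the split diagram $\phi_+(D)_{\rm R} \sqcup \phi_+(D)_{\rm L}$, and derive that the two quandles displayed in the statement are isomorphic, contradicting the hypothesis. The underlying tool is that the fundamental quandle itself (not just the coloring number of Theorem~\ref{thm:quandle}) is a welded-link invariant, as established by Fenn--Rim\'anyi--Rourke \cite{FRR}; a welded equivalence therefore forces the fundamental quandles to be isomorphic as abstract quandles.

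The first step is to compute the fundamental quandle of $\phi_+(D)_{\rm R} \sqcup \phi_+(D)_{\rm L}$. The main subdiagram $\phi_+(D)_{\rm R}$ is a copy of $D$ by construction, so its fundamental quandle admits the presentation $\langle x_1,\dots,x_m \mid r_1,\dots,r_n\rangle_{\rm qdle}$. The secondary subdiagram $\phi_+(D)_{\rm L}$, as noted in Section~\ref{sect:const}, contains only virtual crossings and represents a trivial welded knot, so its fundamental quandle is free on a single generator $y$. Because a split diagram shares neither arcs nor classical crossings between its components, the arc-and-crossing (Wirtinger) construction of the quandle yields the coproduct in the category of quandles, whose presentation is the disjoint union of the generators and relations of the ingredient presentations:
\[
\langle x_1,\dots,x_m, y \mid r_1,\dots,r_n\rangle_{\rm qdle}.
\]

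The second step is to invoke Theorem~\ref{thm:fundquandle}, which supplies the presentation of the fundamental quandle of $\phi_+(D)$ with the single extra longitudinal relation $y^L = y$. Under the assumed welded equivalence, this quandle and the coproduct quandle above would be isomorphic, directly contradicting the standing hypothesis of the proposition. The main obstacle I expect is the step identifying the fundamental quandle of a split welded-link diagram with the categorical coproduct of the fundamental quandles of its components; while this is intuitively clear from the Wirtinger description, one has to check that no accidental identifications among generators are introduced by the split structure and that the coproduct really is computed by the disjoint union of presentations, as stated. Everything else is a formal application of invariance together with Theorem~\ref{thm:fundquandle}.
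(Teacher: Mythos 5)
Your proposal is correct and follows essentially the same route as the paper: the paper's own proof is a one-line appeal to Theorem~\ref{thm:fundquandle} plus the observation that the fundamental quandle of the disjoint union of $\phi_+(D)_{\rm R}$ and $\phi_+(D)_{\rm L}$ has the presentation $\langle x_1,\dots,x_m, y \mid r_1,\dots,r_n\rangle_{\rm qdle}$, which is exactly the coproduct computation you spell out. The extra care you take with the invariance of the fundamental quandle under welded equivalence and with the split-diagram presentation only makes explicit what the paper leaves implicit.
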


\begin{proof}
This follows directly from Theorem~\ref{thm:fundquandle},
since the fundamental quandle of the disjoint union
of $\phi_+(D)_{\rm R}$ and $\phi_+(D)_{\rm L}$ has the presentation
$\langle x_1, \dots, x_m, y \mid r_1, \dots, r_n \rangle_{\rm qdle}$.
\end{proof}

\begin{prop}\label{prop:splitproblemgroup}
Let $D$ be a virtual knot diagram, and let
\[
\langle x_1, \dots, x_m  \mid  r_1, \dots, r_n \rangle
\]
be a presentation of its knot group.
Let $L$ be a preferred longitude.
If the knot group of $\phi_+(D)$, which has the presentation
\[
\langle x_1, \dots, x_m, y   \mid  r_1, \dots, r_n,  y L = L y  \rangle,
\]
is never isomorphic to the group
\[
\langle x_1, \dots, x_m, y   \mid  r_1, \dots, r_n \rangle,
\]
then
$\phi_+(D) = \phi_+(D)_{\rm R} \cup \phi_+(D)_{\rm L}$
is not equivalent to the disjoint union of $\phi_+(D)_{\rm R}$ and $\phi_+(D)_{\rm L}$.
\end{prop}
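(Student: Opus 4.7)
The plan is to mirror the proof of Proposition~\ref{prop:splitproblemqaundle}, replacing the fundamental quandle by the knot group throughout. The underlying invariance I would use is that the knot group (defined via the Wirtinger presentation) is preserved by the Reidemeister, virtual Reidemeister, and welded moves, so equivalent welded diagrams have isomorphic knot groups.

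First I would verify the stated Wirtinger presentation of the knot group of $\phi_+(D)$; this is the group-theoretic analogue of Theorem~\ref{thm:fundquandle}. The argument follows the same scheme: by Lemma~\ref{lem:twistpass} we may slide all $-1v$- and $v1$-twists of $\phi_+(D)$ into a small neighborhood of the terminal point of the longitude $L$. The main subdiagram $\phi_+(D)_{\rm R}$ is a copy of $D$ and contributes the generators $x_1,\dots,x_m$ together with the Wirtinger relations $r_1,\dots,r_n$; the secondary subdiagram $\phi_+(D)_{\rm L}$ is a loop parallel to $D$, contributing a single generator $y$. As $y$ travels along the parallel copy of $L$, each under-passage of an arc of $\phi_+(D)_{\rm R}$ conjugates the meridian by the corresponding $x_i$, so that upon returning to the starting arc $y$ is transformed into $L^{-1}yL$; the Wirtinger relation at the terminal point therefore reduces to $L^{-1}yL=y$, equivalently $yL=Ly$.

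Next I would compute the knot group of the disjoint union $\phi_+(D)_{\rm R}\sqcup\phi_+(D)_{\rm L}$. Since $\phi_+(D)_{\rm L}$ represents a trivial welded knot, its knot group is the infinite cyclic group $\langle y\rangle$, and for a split two-component welded link the Wirtinger presentation decomposes as the disjoint union of those of the components, yielding the free product of the two knot groups. Hence this disjoint union has knot group $\langle x_1,\dots,x_m,y\mid r_1,\dots,r_n\rangle$. If $\phi_+(D)$ were welded-equivalent to the disjoint union, invariance of the knot group would force the two presented groups to be isomorphic, contradicting the hypothesis; this establishes the proposition by contrapositive.

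The main obstacle I anticipate is the group-level analogue of Theorem~\ref{thm:fundquandle} --- in particular, confirming that the conjugation accumulated by $y$ along $L$ is precisely $L^{-1}yL$ in the Wirtinger group and that the $-1v$- and $v1$-twists gathered near the terminal point contribute no further relations beyond $yL=Ly$. This is a direct translation of the quandle-level argument, with the over-crossing Wirtinger rule $x_j=x_k^{-1}x_i x_k$ playing the role of the quandle operation $x_i^{x_k}=x_j$, so no essentially new difficulty is expected.
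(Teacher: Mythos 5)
Your proof is correct, but it takes a more hands-on route than the paper. The paper's own proof is a one-liner: it invokes the standard fact that the associated group $\mathrm{As}(Q(D))$ of the fundamental quandle is isomorphic to the knot group, so that applying $\mathrm{As}(-)$ to the two quandle presentations in Proposition~\ref{prop:splitproblemqaundle} converts $y^L=y$ into $yL=Ly$ and immediately yields the group-level statement from the quandle-level one. You instead redo the work of Theorem~\ref{thm:fundquandle} directly at the level of Wirtinger presentations: sliding the $-1v$- and $v1$-twists to the terminal point of $L$ via Lemma~\ref{lem:twistpass}, tracking the conjugation of the secondary meridian $y$ along the parallel of $L$ to get $L^{-1}yL=y$, and computing the group of the split union as the free product $\langle x_1,\dots,x_m\mid r_1,\dots,r_n\rangle * \langle y\rangle$. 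Both arguments are sound and rest on the same invariance (the knot group, equivalently $\mathrm{As}(Q(-))$, is a welded invariant, and a split diagram has free-product group). The paper's route is shorter because it reuses Theorem~\ref{thm:fundquandle} wholesale through the functor $\mathrm{As}$; yours is more self-contained and makes the Wirtinger bookkeeping explicit, including the point you correctly flag as the only delicate step, namely that the gathered twists contribute exactly the writhe-correcting factor that makes $L$ preferred and add no further relations. Either way the conclusion by contrapositive is the same.
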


\begin{proof}
Since it is known that the associated group of the fundamental quandle of a diagram $D$
is isomorphic to the knot group of $D$,
the statement follows from Proposition~\ref{prop:splitproblemqaundle}.
\end{proof}

For example, for the diagram $D$ in Example~\ref{exam:trefoilparallel},
Proposition~\ref{prop:splitproblemgroup} implies that
$\phi_+(D) = \phi_+(D)_{\rm R} \cup \phi_+(D)_{\rm L}$
is not equivalent to the disjoint union of $\phi_+(D)_{\rm R}$ and $\phi_+(D)_{\rm L}$.
The authors thank Makoto Sakuma for pointing out this observation.
\noindent


\section*{Acknowledgement}

This work was supported by JSPS KAKENHI Grant Numbers
23K03118 and 23H05437.

\end{document}